\documentclass[12pt,psamsfonts,oneside,reqno]{amsart}

\usepackage{thmtools}
\usepackage{amsthm}

\usepackage{graphicx}
\usepackage{caption}
\usepackage{subcaption}

\usepackage{cite}

\usepackage[margin=1.2in]{geometry}

\usepackage{amsmath, amssymb}

%\usepackage{amsthm}

%\usepackage{enumitem}
%\uspaeckage{floatflt}
%\usepackage{wrapfig}

%\usepackage{cancel}

%\usepackage{units}

\usepackage{hyperref}
    \hypersetup{colorlinks, breaklinks,
                linkcolor = blue,
                urlcolor = blue,
                anchorcolor = blue,
                citecolor = blue}

\usepackage{cleveref}

\usepackage{multicol}

\linespread{1.2}

\numberwithin{equation}{section}
\numberwithin{figure}{section}

\newcommand{\cC}{\mathcal{C}}
\newcommand{\cE}{\mathcal{E}}
\newcommand{\bbR}{\mathbb{R}}
\newcommand{\bbZ}{\mathbb{Z}}

\newcommand{\sidelengthvariable}{s}
\newcommand{\multia}{\alpha}
\newcommand{\multib}{\beta}
\newcommand{\bigF}{F}

\newcommand{\<}{\left\langle}
\renewcommand{\>}{\right\rangle}

\DeclareMathOperator{\dist}{dist}
\DeclareMathOperator{\diam}{diam}

\declaretheorem[numberwithin=section,name=Theorem]{thm}
\declaretheorem[sibling=thm,name=Lemma]{lem}
\declaretheorem[sibling=thm,name=Corollary]{cor}

\declaretheorem[sibling=thm,name=Definition,style=definition,
qed=$\Diamond$]{defn}

\declaretheorem[sibling=thm,name=Remark,style=remark,
qed=$\Diamond$]{rem}

\title{The Whitney extension theorem in high dimensions}% (\jobname)}
%\author{Alan Chang}
%\address{Princeton University}
%\date{May 5, 2014}
%\thanks{thank you}

\author[Alan Chang]{Alan Chang}\email{\textcolor{blue}{\href{mailto:ac@math.uchicago.edu}{ac@math.uchicago.edu}}}

\address{Dept. of Mathematics, University of Chicago, 5734 S. University Avenue, Room 208C, Chicago, Illinois 60637}

\subjclass[2010]{58C25}

\keywords{Whitney extension theorem.}

\date{July 22, 2015}

\begin{document}

\begin{abstract}
We prove a variant of the standard Whitney extension theorem for $\cC^m(\bbR^n)$, in which the norm of the extension operator has polynomial growth in $n$ for fixed $m$.
\end{abstract}

\maketitle

%\tableofcontents

\section{Introduction}

Let $E \subset \bbR^n$ be a compact set. Let $F : \bbR^n \to \bbR$ be a $\cC^m$ function. For each point $y \in E$, define 
\begin{align}
\label{eq:taylor-polynomials-of-f}
P_{y}(x) 
= 
\sum_{|\multia| \leq m} \partial^{\multia}\bigF(y)\frac{(x-y)^\multia}{\multia!}.
\end{align}
Thus, we obtain a \emph{Whitney field}, that is, a family $(P_y)_{y \in E}$ of polynomials of degree $\leq m$ indexed by the points of $E$. We write $F|_E = (P_y)_{y \in E}$ to indicate that the Whitney field $(P_y)_{y \in E}$ arises from the function $F$.

We would like to know which Whitney fields arise from a $\cC^m(\bbR^n)$ function in this way. Taylor's theorem gives us some necessary conditions, namely:
\begin{align}
\label{eq:defn-C-m-E-condition-1}
&\sup_{\substack{x \in E\\ |\alpha| \leq m}}
|\partial^\multia P_{x}(x)|
\ \text{ and } \ 
\sup_{\substack{x,y \in E\\x \neq y\\ |\alpha| \leq m}}
\frac{|\partial^\multia (P_{x}-P_y)(x)|} {|x-y|^{m-|\multia|}}
\ \text{ are both finite.}
\\
\label{eq:defn-C-m-E-lim-0}
&\lim_{|x-y| \to 0}
\frac{|\partial^\multia (P_{x} - P_{y})(x)|}{|x-y|^{m-|\multia|}} 
=
0
\ \text{ for all } |\alpha| \leq m.
\end{align}
The Whitney extension theorem asserts that these necessary conditions are in fact sufficient \cite{whitney}. We can view the extension from Whitney fields to functions $\bbR^n \to \bbR$ as a linear map between the normed spaces $\cC^m(E)$ and $\cC^m(\bbR^n)$, which we define now.

\begin{defn}%[$\cC^{m}(\bbR^n)$]
Let $m \in \bbZ_{\geq 0}$. Suppose that $\bigF : \bbR^n \to \bbR$ is $m$-times continuously differentiable. Then we say $\bigF \in \cC^m(\bbR^n)$ if $M := \sup \{|\partial^\multia  \bigF(x)| : x \in \bbR^n, |\alpha| \leq m \} < \infty$. The $\cC^m(\bbR^n)$ norm of $\bigF$ is $M$. 
\end{defn}

\begin{defn}%[$\cC^{m}(E)$]
\label{defn:C-m-omega-E}
An element of $\cC^{m}(E)$ is a Whitney field $(P_y)_{y \in E}$ such that \eqref{eq:defn-C-m-E-condition-1}, and \eqref{eq:defn-C-m-E-lim-0} hold. The $\cC^{m}(E)$ norm of $(P_y)_{y \in E}$ is the greater supremum in \eqref{eq:defn-C-m-E-condition-1}. 

Note: We will write $(P_y)$ as shorthand for $(P_y)_{y \in E}$. We will also write $f$ to denote a Whitney field $(P_y)$.
\end{defn}

In this paper, we prove the following.

\begin{thm}
[Whitney extension theorem, polynomial bound on norms]
\label{thm:intro-whitney-poly-bound}
There exists a linear operator $\cC^{m}(E) \to \cC^{m}(\bbR^n)$ such that if $f \in \cC^m(E)$ is mapped to $\bigF \in \cC^m(\bbR^n)$, then
\begin{align}
\label{eq:defn-extension-operator}
\text{$F|_{E} = f$ and  
$F$ has derivatives of all orders on $E^c$.}
\end{align}
Furthermore, the norm of the operator is at most $Cn^{5m/2}$, where $C$ depends only on $m$.
\end{thm}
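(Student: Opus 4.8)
The plan is to revisit Whitney's original construction and track the dimensional dependence of every constant carefully. First I would fix a Whitney decomposition of the open set $E^c$ into dyadic cubes $\{Q_i\}$ with $\sidelength(Q_i) \approx \dist(Q_i, E)$, and build a smooth partition of unity $\{\varphi_i\}$ subordinate to a mild dilation of these cubes; the key quantitative point is that each point of $E^c$ lies in a bounded number of the dilated cubes, but in $\bbR^n$ that bound is exponential in $n$, so I cannot afford to sum absolute values of the $\varphi_i$ naively. The extension is then $\bigF = \sum_i \varphi_i P_{y_i}$ on $E^c$ and $\bigF = P_y$ on $E$, where $y_i \in E$ is a near-closest point to $Q_i$. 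The standard estimates give that the $\cC^m$ norm of $\bigF$ near a cube $Q_i$ is controlled by $\|f\|_{\cC^m(E)}$ times a sum of terms of the form $|\partial^\multib \varphi_i|\cdot |\partial^{\multia-\multib}(P_{y_i}-P_{y_j})(x)| $ over neighbors $j$; here $|\partial^\multib\varphi_i| \lesssim \sidelength(Q_i)^{-|\multib|}$ and $|\partial^{\multia-\multib}(P_{y_i}-P_{y_j})(x)| \lesssim \|f\|\, \sidelength(Q_i)^{m-|\multia-\multib|}\cdot(\text{something})$, and the $\sidelength(Q_i)$ powers cancel.

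The heart of the matter — and the step I expect to be the main obstacle — is to get the right power of $n$ out of two competing sources. The first is combinatorial: the number of Whitney cubes neighboring a given cube is $\sim 3^n$, and a crude bound $\sum_j |\partial^\multib\varphi_i| \lesssim 3^n \sidelength(Q_i)^{-|\multib|}$ would only give exponential growth. To beat this, I would exploit cancellation in the partition of unity: since $\sum_j \varphi_j \equiv 1$, we have $\sum_j \partial^\multib \varphi_j \equiv 0$ for $\multib \neq 0$, so in the expression for $\partial^\multia \bigF$ one can replace $P_{y_j}$ by $P_{y_j} - P_{y_i}$ and then further subtract off a suitable "average" polynomial, turning the sum into one of second-difference type. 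Estimating $\partial^\gamma(P_{y_i} - 2P_{y_k} + P_{y_j})$ via \eqref{eq:defn-C-m-E-condition-1} gains a factor $|y_i - y_j|^{?}$ that, combined with the geometry, tames the combinatorial blow-up. The second source of $n$-dependence is geometric/analytic: evaluating a polynomial of degree $\le m$ and its derivatives at a point $x$ at distance $r$ from $y_i$ in $\bbR^n$ costs multinomial factors $\binom{n+m}{m} \sim n^m$ from the number of monomials, and each application of the chain rule to $\varphi_i(x) = \psi((x - c_i)/\sidelength(Q_i))$ introduces gradient norms. I would build $\psi$ as a function of a single scalar (a smoothed distance to the cube's complement, or a product of one-dimensional bumps) so that $|\nabla^k \psi|$ is polynomial rather than exponential in $n$.

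Concretely, I would organize the proof as follows. Section~2: set up the Whitney decomposition and record the dimensional constants in the covering lemma, in particular that the dilated cubes $Q_i^*$ cover $E^c$ with overlap bounded by some explicit $N(n)$, and that $\diam(Q_i) = \sqrt{n}\,\sidelength(Q_i)$ — this $\sqrt n$ is the origin of the exponent $5m/2$ rather than $2m$. Section~3: construct $\varphi_i$ with the sharp bounds $|\partial^\multib \varphi_i| \le C_m\, n^{|\multib|/2}\sidelength(Q_i)^{-|\multib|}$, using a one-dimensional profile so that each derivative in a coordinate direction is harmless but the homogeneity in $\sidelength$ brings a $\sqrt n$ per derivative through $\diam(Q_i)$. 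Section~4: prove that $\bigF$ is $\cC^m$ across $E$, i.e. verify the Taylor estimates connecting interior points to boundary points; this uses \eqref{eq:defn-C-m-E-lim-0} for the $\lim = 0$ conclusion and is where \eqref{eq:defn-C-m-E-condition-1} enters with its full strength. Section~5: assemble the operator norm bound by multiplying the three sources — $n^m$ from the number of monomials in each Taylor polynomial, $n^{m/2}$ from the partition-of-unity derivatives, and $n^{m/2}$ from converting $\sidelength$ to $\diam$ — to land at $C_m\, n^{5m/2}$. Linearity of the operator is immediate since $\bigF$ depends linearly on $f$ through the formula $\bigF = \sum_i \varphi_i P_{y_i}$. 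The delicate point throughout is resisting the temptation to bound $\sum_j |\cdot|$ and instead always keeping the telescoping structure of the partition of unity, since that is the only way the exponential factor $3^n$ is avoided.
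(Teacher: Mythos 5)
Your setup is the same as the paper's (Whitney cubes, a one–variable bump $\vartheta$ tensored to give $\Theta$, the extension $\sum_k \varphi_k^* P_{p_k}$), and you have correctly located the obstacle: at bad points the number of dilated cubes $Q_j^*$ containing $x$ is of order $3^n$, and since the $\cC^m$ norm is a supremum you cannot dismiss these points as having small measure.

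However, the mechanism you propose for defeating the $3^n$ does not work. The standard trick $\sum_j \partial^\multib \varphi_j^* \equiv 0$ (for $\multib \neq 0$), which lets one replace $P_{p_j}$ by $P_{p_j}-P_{p_i}$, is already built into Whitney's proof and into \Cref{lem:derivatives-of-extm}; after using it one is still forced to bound $\sum_j |\partial^\multib \varphi_j^*(x)|\,|\partial^{\multia-\multib}(P_{p_j}-P_{p_i})(x)|$ by a sum of absolute values over $\sim 3^n$ indices $j$. Your further step — subtracting an ``average'' polynomial to produce a second difference $P_{p_i}-2P_{p_k}+P_{p_j}$ — gains nothing: the $\cC^m(E)$ norm in \eqref{eq:defn-C-m-E-condition-1} controls only \emph{first} differences $P_x-P_y$, so the best estimate on a second difference is just the triangle inequality applied to two first differences, with no extra power of $|y_i-y_j|$. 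More fundamentally, making each term smaller does not reduce the \emph{count} of terms, and the terms $\partial^\multib\varphi_j^*(x)$ have no sign structure that would give genuine cancellation when you try to bound the sum. So your outline, as stated, still produces an exponential factor. (As a minor point, your accounting $n^m\cdot n^{m/2}\cdot n^{m/2}$ gives $n^{2m}$, not $n^{5m/2}$; in the paper the extra $n^m$ comes from the $t^{-|\multia|}$ factor after setting $t=1/n$.)

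The paper's actual device is orthogonal to yours: it \emph{averages over translations of the dyadic grid}. One introduces a transition width $t$ for the cutoffs, so $|\partial^\multib\varphi_k^*|\lesssim (st)^{-|\multib|}\Psi(x)^{|\multib|+1}$ where $\Psi(x)=\sum_p\prod_i(\psi(x_i/p)+1)$ tracks how close $x$ is to cube faces. For a fixed grid $\Psi$ can be $\sim 2^n$. The key observation (\Cref{lem:avg-of-Psi}) is that if one shifts the origin by $b$ and averages, then $\langle \Psi_{[b]}(x)^k\rangle \lesssim (1+t2^{k+1})^n$: for most shifts $b$, the fixed point $x$ is far from all cube faces. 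Choosing $t=1/n$ makes $(1+2^{k+1}/n)^n\le e^{2^{k+1}}$, an absolute constant. The averaged operator $\langle\cE_m\rangle(f)(x)=\langle\cE_{m,[b]}(f)(x)\rangle$ is still linear, still an extension (this requires \Cref{lem:periodic-in-b} to make sense of the average and \Cref{lem:avg-ext-maps-Cm-omega}, via Glaeser's $\cC^{m,\omega}$ theorem, to pass derivatives through the average), and its norm is $O(n^{5m/2})$: $n^{m+|\multia|/2}$ from the Taylor/diameter bookkeeping you identified, times $n^{|\multia|}$ from $t^{-|\multia|}=n^{|\multia|}$. To repair your argument you would need to replace the second-difference idea with this averaging step (or some other genuinely probabilistic/measure-theoretic input); algebraic cancellation within a single fixed Whitney decomposition is not enough.
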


A linear operator $\cE_m$ satisfying \eqref{eq:defn-extension-operator} is an \emph{extension operator}. In the well-known proof of the Whitney extension theorem, the bound on the norm of the operator grows exponentially with the dimension $n$. Consequentially, our main result in this paper is the construction an extension operator $\cC^m(E) \to \cC^m(\bbR^n)$ that yields a bound having polynomial growth in $n$ for fixed $m$. 

In the standard proof, the exponential growth of the the dimension is due to the behavior of ``cutoff functions'' (defined in \Cref{sec:cutoff-functions}) of ``Whitney cubes'' (defined in \Cref{sec:std-w-e-t}) near their boundaries. We take care of this by averaging over translations of the cubes, which eliminates the problematic behavior.

\begin{rem}
Whitney \cite{whitney} gave the original proof of the theorem. Glaeser \cite{glaeser} presented a version of the Whitney extension theorem using the space $\cC^{m,\omega}(E)$, where $\omega$ is a modulus of continuity.\footnote{The space $\cC^{m,\omega}(E)$ is referred to as $\operatorname{Lip}(m+\omega, E)$ in \cite[Section 4, (4.6)]{stein1970}.} Although we will work mainly with $\cC^m(E)$, we will invoke Glaeser's result in the proof of \Cref{lem:avg-ext-maps-Cm-omega} to show that the average over a particular family of functions satisfies certain regularity conditions.
 
In this paper, we often refer to the proof given in Stein's textbook \cite{stein1970}. Stein presents a proof of Glaeser's version of the theorem; to obtain a proof of the $\cC^m(E)$ version, we simply drop all instances of $\omega$ from the proof.
\end{rem}

\begin{rem}
For $\cC^m(\bbR^n)$ and $\cC^m(E)$, there are many equivalent choices of norms. For example, we could also define the $\cC^m(\bbR^n)$ norm of $F$ via
$
\sup_{x \in E} (
\sum_{|\alpha| \leq m}
|
\partial^\alpha F(x)
|^2
)^{1/2}
$. However, the choice of norm does not affect our main result. (The exponent $5m/2$ in \Cref{thm:intro-whitney-poly-bound} might change, but the bound will remain a polynomial in $n$.)
\end{rem}

\begin{rem}
On $\cC^{1, 1}(\bbR^n)$, the space of functions with a Lipschitz derivative, we could define a norm via the Lipschitz constant of the derivative:
\begin{equation}
\|F\|_{\cC^{1,1}(\bbR^n)}
=
\sup_{\substack{x, y \in \bbR^n \\ x \neq y}}
\frac{|\nabla F(x) - \nabla F(y)|}{|x-y|}
.
\end{equation}
Given a Whitney field $(P_y) \in \cC^{1,1}(E)$, the papers of Le Gruyer and Wells calculate exactly the least possible $\cC^{1, 1}(\bbR^n)$ norm for any extension of $(P_y)$ and provide a construction that attains this minimum \cite{legruyer, wells}. This is a remarkable result, but the method does not seem to apply to other cases.
\end{rem}

\begin{rem}
Unlike Le Gruyer, we are not finding an extension with minimal norm. Instead, we are computing the least possible norm up to a constant that depends only on the dimension $n$. 

That is, suppose our extension operator (from \Cref{thm:intro-whitney-poly-bound}) maps $(P_y) \in \cC^m(E)$ to $F \in \cC^m(\bbR^n)$, while $\tilde F \in \cC^m(\bbR^n)$ is some competing extension. The composition of restriction followed by extension gives us $\tilde F \mapsto (P_y) \mapsto F$. It is a simple consequence of Taylor's theorem that the norm of the restriction operator is $O(n^m)$. (See \Cref{sec:restriction-norm}.) Combining this result with \Cref{thm:intro-whitney-poly-bound} guarantees that $\| F \|_{\cC^m(\bbR^n)} \leq C n^{7m/2} \| \tilde F \|_{\cC^m(\bbR^n)}$, where $C$ only depends on $m$.  Thus, $F$ has the least possible $\cC^m$ norm up to a factor of $Cn^{7m/2}$.
\end{rem}

\section{Cutoff functions}\label{sec:cutoff-functions}

The proof of the Whitney extension theorem relies on \emph{cutoff functions}. (We say $\phi : \bbR^n \to \bbR$ is a \emph{cutoff function} of the cube $Q \subset \bbR^n$ if $\phi$ is $1$ on $Q$ and $0$ outside of $Q^*$, where $Q^*$ is a cube with the same center as $Q$ but with a larger side length.)

\subsection{Construction of cutoff functions}

To construct cutoff functions, we start by fixing a $\sigma : \bbR \to \bbR$  with the following properties: 
(1)
$\sigma(x) = 0$ if $x \leq -1$,
(2) 
$\sigma(x) = 1$ if $x \geq 0$,
(3)
$\sigma$ is nondecreasing,
(4)
$\sigma \in C^\infty$.

Next, let $t \in (0,\frac{1}{4})$. (Eventually, we will choose $t = 1/n$, but for most of the paper, $t$ will be a parameter independent of $n$.) Consider the function $\vartheta : \bbR \to \bbR$, where $\vartheta(x)$ is given by $0, \sigma(\frac{x}{t}), 1, \sigma(\frac{1-x}{t}), 0$ on $(-\infty, -t]$, $[-t, 0]$, $[0, 1]$, $[1, 1+t]$, $[1+t, \infty)$, respectively. Then $\vartheta$ is a smooth \emph{cutoff function} of the unit interval $[0,1]$. 

For $x = (x_1, \ldots, x_n) \in \bbR^n$, we define 
$
\Theta(x) = \vartheta(x_1) \cdots \vartheta(x_n).
$
Then $\Theta(x)$ is a smooth cutoff function for the unit $n$-cube $[0,1]^n$. For $b = (b_1, \ldots, b_n) \in \bbR^n$ and $\sidelengthvariable > 0$, the cube $Q = [b_1, b_1 + \sidelengthvariable] \times \cdots \times [b_n, b_n + \sidelengthvariable]$ will have cutoff function
\begin{align}\label{eq:cutoff-function-using-Theta}
\phi_Q(x)
=
\Theta\left(\frac{x-b}{\sidelengthvariable}\right)
.
\end{align}
Note that $\vartheta$ and $\Theta$ both depend on $t$, but we do not write this dependence explicitly. The parameter $t$ affects how quickly the cutoff functions ``drop from $1$ to $0$.'' 

\subsection{Estimates on sums over cutoff functions}\label{sec:estimates-on-tilings}

Consider a tiling of $\bbR^n$ by cubes of unit length. We would like to obtain estimates on sums over cutoff functions of these cubes (since such estimates would also give us estimates on sums over a subset of the terms). 

Let $\psi : \bbR \to \bbR$ be given by $\psi(x) = 1$ if $\dist(x, \bbZ) \leq t$ and $\psi(x) = 0$ otherwise. (In other words, $\psi$ is the characteristic function of ``numbers close to the integers.'')

\begin{lem}%[$n$-dimensional estimate]
\label{lem:n-dim-estimate-Theta}
For all $x = (x_1, \ldots, x_n) \in \bbR^n$ and multi-indices $\multia$,
\begin{align}
\label{eq:n-dim-estimate-Theta}
\sum_{a \in \bbZ^n} \left|\partial^\multia \Theta(x-a)\right|
\lesssim
t^{-|\multia|} \prod_{i=1}^n (\psi(x_i) + 1)
.
\end{align}
The implied constant does not depend on $n$ or $t$.
\end{lem}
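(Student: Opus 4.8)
The plan is to reduce \eqref{eq:n-dim-estimate-Theta} to a one-dimensional estimate by exploiting the product structure of $\Theta$. Since $\Theta(x) = \vartheta(x_1)\cdots\vartheta(x_n)$, we have $\partial^\alpha \Theta(x) = \prod_{i=1}^n \vartheta^{(\alpha_i)}(x_i)$, so the sum over $\bbZ^n$ on the left-hand side factors as a product of one-dimensional sums:
\[
\sum_{a \in \bbZ^n} \bigl|\partial^\alpha \Theta(x-a)\bigr|
= \prod_{i=1}^n \Bigl( \sum_{a_i \in \bbZ} \bigl|\vartheta^{(\alpha_i)}(x_i - a_i)\bigr| \Bigr).
\]
It therefore suffices to prove a one-dimensional bound of the shape $\sum_{a \in \bbZ} |\vartheta^{(k)}(x - a)| \lesssim_k t^{-k}(\psi(x) + 1)$ for each integer $k \geq 0$ and then multiply over the coordinates $i$.

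For $k \geq 1$ I would use two features of $\vartheta^{(k)}$. First, because $\sigma$ is constant on $(-\infty,-1]$ and on $[0,\infty)$, the function $\vartheta$ is constant outside $[-t,0]\cup[1,1+t]$, which lies in the $t$-neighborhood of $\{0,1\}\subset\bbZ$; hence $\vartheta^{(k)}$ is supported there. Second, on $[-t,0]$ we have $\vartheta(x)=\sigma(x/t)$ and on $[1,1+t]$ we have $\vartheta(x)=\sigma((1-x)/t)$, so $|\vartheta^{(k)}|\le t^{-k}\|\sigma^{(k)}\|_\infty$ everywhere. Now fix $x$. If $\psi(x)=0$, then $x$ is farther than $t$ from every integer, so $\vartheta^{(k)}(x-a)=0$ for all $a$ and the sum vanishes. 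If $\psi(x)=1$, let $N\in\bbZ$ be the integer within distance $t$ of $x$ (unique, since $t<1/4$ forces $2t<1$); inspecting the two possible locations of $x-a$ in the support shows $\vartheta^{(k)}(x-a)\ne0$ only for $a\in\{N-1,N\}$, so at most two terms survive and $\sum_a|\vartheta^{(k)}(x-a)|\le 2t^{-k}\|\sigma^{(k)}\|_\infty = t^{-k}\|\sigma^{(k)}\|_\infty(\psi(x)+1)$.

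For $k=0$ the argument is easier: $\vartheta$ takes values in $[0,1]$ and is supported on $[-t,1+t]$, an interval of length $1+2t<3/2$, so at most two translates $\vartheta(x-a)$ can be nonzero, and if $\psi(x)=0$ then (using $2t<1$) at most one is nonzero; in either case $\sum_a\vartheta(x-a)\le\psi(x)+1$. The one remaining — and, to my mind, essential — point is to see that taking the product over $i$ of these one-dimensional bounds yields a constant independent of $n$. Set $K:=\max\{1,\|\sigma'\|_\infty,\dots,\|\sigma^{(|\alpha|)}\|_\infty\}$, a number depending only on $|\alpha|$ and the fixed $\sigma$. The $i$-th factor contributes exactly $(\psi(x_i)+1)$ when $\alpha_i=0$, and at most $Kt^{-\alpha_i}(\psi(x_i)+1)$ when $\alpha_i\ge1$. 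Since at most $|\alpha|$ of the indices have $\alpha_i\ge1$ and $K\ge1$, the constants multiply to at most $K^{|\alpha|}$, the powers of $t$ multiply to $t^{-|\alpha|}$, and the factors $(\psi(x_i)+1)$ multiply to $\prod_i(\psi(x_i)+1)$, giving \eqref{eq:n-dim-estimate-Theta} with implied constant $K^{|\alpha|}$, depending on neither $n$ nor $t$.

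The main obstacle is not any single computation — the one-dimensional estimates and the bookkeeping in the $\psi(x)=1$ case are routine once $t<1/4$ is invoked — but rather the structural observation that the zeroth-order coordinates contribute no constant at all (exactly $(\psi(x_i)+1)$, not a multiple of it), so only the at most $|\alpha|$ coordinates with $\alpha_i\ge1$ can inflate the constant. This is exactly what prevents an exponential-in-$n$ blowup and is why the estimate is stated with a dimension-free implied constant.
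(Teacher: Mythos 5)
Your argument is correct and follows essentially the same route as the paper: reduce to a one-dimensional estimate for $\sum_k |\vartheta^{(j)}(x-k)|$ via the product structure of $\Theta$, split into the cases $\psi(x)=0$ and $\psi(x)=1$, and multiply over the coordinates. The paper compresses the final multiplication into ``it suffices to prove the one-dimensional version,'' whereas you make explicit the key bookkeeping point that the coordinates with $\alpha_i=0$ contribute the factor $(\psi(x_i)+1)$ with constant exactly $1$ (so only the at most $|\alpha|$ coordinates with $\alpha_i\ge 1$ can inflate the constant) --- this is precisely what keeps the implied constant dimension-free, and it is worth stating, as you do, rather than leaving implicit.
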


\begin{proof}
Because $\Theta(x) = \vartheta(x_1) \cdots \vartheta(x_n)$, it suffices to prove the one-dimensional version of \eqref{eq:n-dim-estimate-Theta}. That is, we need to show for all $x \in \bbR$ and $j \in \bbZ_{\geq 0}$,
\begin{align}
\label{eq:1-dim-estimate-Theta}
\sum_{k \in \bbZ} \left|\vartheta^{(j)}(x - k)\right|
\lesssim
t^{-j} (\psi(x) + 1)
.
\end{align}

If $\dist(x, \bbZ) > t$, then $x \in (a + t, a+1-t)$ for some $a \in \bbZ$ and we have 
$
\sum_{k \in \bbZ} |\vartheta^{(j)}(x - k)| 
= 
|\vartheta^{(j)}(x - a)|
\leq 1
$
(since $\vartheta$ is identically $1$ in a neighborhood of $x-a$).

If $\dist(x, \bbZ) \leq t$, then $x \in (a - t, a+t)$ for some $a \in \bbZ$ and we have 
$
\sum_{k \in \bbZ} |\vartheta^{(j)}(x - k)| = |\vartheta^{(j)}(x-a)| + |\vartheta^{(j)}(x-(a-1))| \leq c_j t^{-j}
$
with $c_j = 2 \sup_{x \in \bbR} |\sigma^{(j)}(x)|$.

Putting the two cases together gives us \eqref{eq:1-dim-estimate-Theta}.
\end{proof}

\begin{rem}\label{rem:implied-constants}
For the rest of this paper, unless otherwise noted, the implied constant in an expression $A \lesssim B$ can depend only on $\sigma$, $m$ and $|\alpha|$.\footnote{It might seem unnecessary to state that the implied constant can depend on $|\alpha|$, since in most cases we work with $|\alpha| \leq m$. However, in certain situations, we work with derivatives $\alpha$ of all orders, including $|\alpha| > m$.} In particular, the constant cannot depend on $n$ or $t$.
\end{rem}

\section{The Whitney cube partition and the extension operator $\cE_m$}\label{sec:std-w-e-t}

\subsection{Notation and basic properties}

We will write the \emph{Whitney cube partition} of $E^c$ as $E^c = \bigcup_{k=0}^\infty Q_k$, where each $Q_k \subseteq \bbR^n$ is a closed cube with sides parallel to the axes and for $j \neq k$, the interiors of the cubes $Q_j$ and $Q_k$ are disjoint. The $Q_k$ are the \emph{Whitney cubes}. \cite[Section 1]{stein1970} gives a construction of the Whitney cube partition. For this paper, it is not important to know the details of the construction. We will rely mainly on the properties listed below in \Cref{lem:basic-whitney-cube-properties}.

We let $\phi_k(x)$ denote the cutoff function of the cube $Q_k$, given by \eqref{eq:cutoff-function-using-Theta}. Then we set $S(x) = \sum_{k} \phi_k(x)$ and $\phi_k^*(x) = \phi_k(x)/S(x)$. (Hence $\{\phi_k^*(x)\}_k$ is a \emph{partition of unity}.) We define the closed cube $Q_k^*$ as follows: if $Q_k$ is centered at $x$ and has side length $\sidelengthvariable$, then $Q_k^*$ is centered at $x$ and has side length $\sidelengthvariable(1+2t)$. Note that $Q_k^*$ contains the support of $\phi_k$ (and $\phi_k^*$).

We will need some auxiliary functions to be used in our estimates:

\begin{itemize}
\item
$\delta(x) = \dist(x, E)$.
\item
$\Psi : E^c \to \bbR$ is given by
\begin{align}
\label{eq:defn-psi-x}
\Psi(x)
=
\sum_{c_1' \delta(x) n^{-1/2} \ \leq \ p \  \leq \  c_2'\delta(x) n^{-1/2}}
\left[
\prod_{i=1}^n \left( 
\psi\left(\frac{x_i}{p}\right) + 1
\right)
\right]
\end{align}
where the sum over $p$ is over powers of $2$. Also, in \eqref{eq:defn-psi-x}, $c_1'$ and $c_2'$ are absolute constants that will be specified later, in \Cref{subsec:estimates-partition-unity}. (Recall that $\psi$ is defined in \Cref{sec:estimates-on-tilings}.)

\end{itemize}

Some basic properties of the Whitney cube partition include the following, which we repeat from \cite{stein1970}.

\begin{lem}[Basic properties of Whitney cubes]
\label{lem:basic-whitney-cube-properties}
\leavevmode %http://tex.stackexchange.com/questions/73741/enumerate-alignment-problem-in-theorem-environment
\begin{itemize}
\item
All Whitney cubes are dyadic. (We say a cube $Q \subset \bbR^n$ is \emph{dyadic} if its side length $\sidelengthvariable$ is a power of $2$ and its vertices all lie in the lattice $\sidelengthvariable \bbZ^n$.)
\item
If two Whitney cubes of side lengths $\sidelengthvariable_1$ and $\sidelengthvariable_2$ intersect at their boundaries, then 
$
\frac{1}{4}\sidelengthvariable_2 \leq \sidelengthvariable_1 \leq 4\sidelengthvariable_2. 
$
\item
If two distinct Whitney cubes, say $Q_1$ and $Q_2$, are such that $Q_1$ and $Q_2^*$ intersect, then $Q_1$ and $Q_2$ intersect at their boundaries.
\item
Let $x \in Q_k^*$. Let $s$ be the side length of $Q_k$. Then 
\begin{align}
\label{eq:delta-dist-comparable}
\delta(x) \approx \dist(Q_k, E) \approx \dist(Q_k^*, E) \approx \diam(Q_k) \approx \diam(Q_k^*) \approx sn^{1/2}
,
\end{align}
where $\diam(Q)$ denotes the diameter of the cube $Q$, and $A \approx B$ means both $A \lesssim B$ and $B \lesssim A$ hold. The implied constants are absolute.
\end{itemize}
\end{lem}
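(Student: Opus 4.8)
The plan is to derive all four bullets from Stein's construction \cite[Section 1]{stein1970}, using from it only the single fact we really need: every $Q_k$ is a dyadic cube with $\diam(Q_k) \leq \dist(Q_k, E) \leq 4\diam(Q_k)$ (the numerical constant $4$ plays no role below). Everything else is elementary cube geometry, and the one thing to watch throughout is that no constant acquires a hidden dependence on $n$ or $t$ — the only way $n$ is allowed to enter is through the identity $\diam(Q) = \sidelength(Q)\sqrt{n}$. The first bullet is part of the construction. For the second, take a common point $x \in Q_1 \cap Q_2$ of two touching cubes and chain triangle inequalities: $\diam(Q_1) \leq \dist(Q_1, E) \leq \dist(x, E) \leq \diam(Q_2) + \dist(Q_2, E) \leq 5\diam(Q_2)$, which gives $\sidelength(Q_1) \leq 5\,\sidelength(Q_2)$, and since both side lengths are powers of $2$ this upgrades to the factor $4$; the reverse inequality is symmetric.

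The fourth bullet is pure bookkeeping. From $\diam(Q_k) \leq \dist(Q_k, E) \leq 4\diam(Q_k)$ we read off $\dist(Q_k, E) \approx \diam(Q_k) = \sidelength(Q_k)\sqrt{n}$, and since $1 \leq 1+2t < \tfrac32$ we have $\diam(Q_k^*) = (1+2t)\diam(Q_k) \approx \diam(Q_k)$. For $x \in Q_k^*$ each coordinate of $x$ lies within $t\,\sidelength(Q_k)$ of $Q_k$, so $\dist(x, Q_k) \leq t\,\sidelength(Q_k)\sqrt{n} = t\diam(Q_k)$, whence $(1-t)\diam(Q_k) \leq \delta(x) \leq \dist(x, Q_k) + \dist(Q_k, E) \leq (4+t)\diam(Q_k)$ with $1 - t > \tfrac34$; and $\dist(Q_k^*, E)$ is handled the same way, using $\dist(Q_k^*, E) \leq \dist(Q_k, E)$ for the upper bound. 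Every constant that appears is a pure number, which is exactly what \eqref{eq:delta-dist-comparable} asserts.

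The third bullet is the only place where a first attempt looks worrying, and it is the step I would treat most carefully: $Q_2^*$ is the dilate of $Q_2$ by a factor $1+2t$, and in the \emph{Euclidean} metric a point of $Q_2^* \setminus Q_2$ may lie as far as $t\,\sidelength(Q_2)\sqrt{n}$ from $Q_2$, which is not small compared with $\sidelength(Q_2)$ once $n$ is large. The remedy is to argue in the $\ell^\infty$ metric, in which the dilation moves a point only $t\,\sidelength(Q_2) < \tfrac14\sidelength(Q_2)$ per coordinate. So, given $y \in Q_1 \cap Q_2^*$, I would first run the fourth-bullet estimates at $y$ (which lies in $Q_1 \subseteq Q_1^*$ and in $Q_2^*$) to get $\sidelength(Q_1) \approx \sidelength(Q_2)$, in particular $\sidelength(Q_1) \geq \tfrac14\sidelength(Q_2)$; then $\dist_{\ell^\infty}(Q_1, Q_2) \leq \dist_{\ell^\infty}(y, Q_2) \leq t\,\sidelength(Q_2) < \tfrac14\sidelength(Q_2)$. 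But if the closed cubes $Q_1$ and $Q_2$ were disjoint, their projections onto some coordinate axis would be disjoint closed dyadic intervals, and since the longer one has length a multiple of the shorter one's, the gap between them would be at least $\min(\sidelength(Q_1), \sidelength(Q_2)) \geq \tfrac14\sidelength(Q_2)$ — contradicting the previous line. Hence $Q_1 \cap Q_2 \neq \emptyset$, and since distinct Whitney cubes have disjoint interiors, the intersection lies in the common boundary. There is no genuine obstacle beyond organizing these estimates so that the constants stay absolute.
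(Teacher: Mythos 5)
The paper gives no proof of this lemma: it explicitly says these facts are ``repeat[ed] from \cite{stein1970}'', so the intended argument is Stein's. Your self-contained derivation from the single Whitney inequality $\diam Q \leq \dist(Q,E) \leq 4\diam Q$ together with dyadicity is correct and follows the same standard lines, so there is nothing to compare ``route against route''; you have simply supplied the proof the paper outsources.

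The genuinely useful thing in your write-up is the $\ell^\infty$-versus-$\ell^2$ point in the third bullet, and your diagnosis is exactly right. The Euclidean distance from a corner of $Q^*$ to $Q$ is $t\sidelengthvariable\sqrt{n}$, which for fixed $t$ and large $n$ swamps the gap $\min(\sidelengthvariable_1,\sidelengthvariable_2) \geq \tfrac14\sidelengthvariable_2$ that separates disjoint dyadic cubes of comparable size, so a naive $\ell^2$ triangle-inequality argument produces no contradiction. Working coordinate by coordinate (equivalently, in $\ell^\infty$, where the dilation moves each coordinate only $t\sidelengthvariable_2 < \tfrac14\sidelengthvariable_2$) closes the argument with constants free of $n$ and $t$, which is exactly what the lemma asserts and what the paper's whole dimension-tracking project needs. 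It is worth having this spelled out rather than buried in a citation, since the paper's choice $Q_k^* = (1+2t)Q_k$ differs from Stein's fixed dilation factor.

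One small slip in the fourth bullet: the step $\delta(x) \leq \dist(x,Q_k) + \dist(Q_k,E)$ is not a valid triangle inequality, because the point of $Q_k$ nearest to $x$ need not be the point of $Q_k$ nearest to $E$. Route instead through a point $w \in Q_k$ that nearly attains $\dist(Q_k,E)$: then $\delta(x) \leq |x-w| + \dist(w,E) \leq \diam(Q_k^*) + \dist(Q_k,E) \leq (5+2t)\diam(Q_k)$. The constant degrades slightly but stays absolute, so \eqref{eq:delta-dist-comparable} and everything downstream is unaffected.
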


\subsection{Estimates on the partition of unity}
\label{subsec:estimates-partition-unity}

We need the following estimate on the partition of unity.

\begin{lem}
\label{lem:derivatives-of-sum-phi-star}
Let $x \in E^c$, let $Q$ be a Whitney cube that contains $x$, and let $\sidelengthvariable$ be the side length of $Q$. Then
\begin{align}
\sum_k
|\partial^\multia  \phi_k^*(x)|
&\lesssim
(\sidelengthvariable t)^{-|\multia|}
\Psi(x)^{|\multia|+1}.
\end{align}
Following our conventions on the use of $\lesssim$ (see \Cref{rem:implied-constants}.), the implied constant does not depend on $n$, $t$ or $\sidelengthvariable$. 
\end{lem}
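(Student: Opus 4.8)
The plan is to reduce the bound on $\sum_k |\partial^\multia \phi_k^*(x)|$ to estimates on $\sum_k |\partial^\multib \phi_k(x)|$ for $|\multib| \leq |\multia|$, and then bound those latter sums using \Cref{lem:n-dim-estimate-Theta}. The first step is algebraic: since $\phi_k^* = \phi_k / S$ with $S = \sum_j \phi_j$, the derivative $\partial^\multia \phi_k^*$ expands by the quotient/Leibniz rule into a sum of terms, each of which is a product of $\partial^{\multib}\phi_k$ (for some $\multib \leq \multia$) with at most $|\multia|$ factors of the form $\partial^{\multib'} S$ divided by a power $S^{j+1}$ with $j \leq |\multia|$. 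Since $S(x) \geq 1$ wherever $x$ lies in some Whitney cube (because $\phi_Q = 1$ on $Q$), the powers of $1/S$ in the denominator are harmless, so it suffices to control $\sum_k |\partial^{\multib}\phi_k(x)| \cdot |\partial^{\multib_1} S(x)| \cdots |\partial^{\multib_r} S(x)|$ with $|\multib| + |\multib_1| + \cdots + |\multib_r| = |\multia|$ and $r \leq |\multia|$. Pulling the $\partial^{\multib_i} S$ factors out of the sum over $k$, this reduces to showing (i) $\sum_k |\partial^{\multib}\phi_k(x)| \lesssim (\sidelengthvariable t)^{-|\multib|} \Psi(x)$ for each fixed $\multib$, and (ii) $|\partial^{\multib'} S(x)| \lesssim (\sidelengthvariable t)^{-|\multib'|} \Psi(x)$ for each fixed $\multib'$ — noting that (ii) is just (i) with $\multia$ replaced by $0$ inside, i.e. the special case summing all cutoffs without a derivative hitting a distinguished one, so really only (i) is needed. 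Multiplying the at most $|\multia|+1$ such factors together produces $\Psi(x)^{|\multia|+1}$ and the combined power $(\sidelengthvariable t)^{-|\multia|}$, giving the claim.

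So the heart of the matter is (i): for $x$ in a Whitney cube $Q$ of side length $\sidelengthvariable$, and a fixed multi-index $\multib$,
\[
\sum_k |\partial^{\multib}\phi_k(x)| \lesssim (\sidelengthvariable t)^{-|\multib|} \Psi(x).
\]
Only cubes $Q_k$ with $x \in Q_k^*$ contribute. By the Whitney cube properties (\Cref{lem:basic-whitney-cube-properties}), any such $Q_k$ must abut $Q$ along a boundary, hence has side length $p$ comparable to $\sidelengthvariable$ — in fact $p \in [\tfrac14 \sidelengthvariable, 4\sidelengthvariable]$, and by \eqref{eq:delta-dist-comparable} all these side lengths are $\approx \delta(x) n^{-1/2}$, which is exactly the range of $p$ appearing in the definition \eqref{eq:defn-psi-x} of $\Psi(x)$ (this is where the constants $c_1', c_2'$ get chosen). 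For a fixed side length $p$ (a power of $2$), the cubes $Q_k$ of that side length that meet $Q_k^* \ni x$ are sub-cubes of a fixed tiling of $\bbR^n$ by $p$-cubes, so $\sum_{k : \sidelength(Q_k) = p} |\partial^{\multib}\phi_k(x)|$ is dominated by the full lattice sum $\sum_{a \in \bbZ^n} |\partial^{\multib}\Theta(\tfrac{x}{p} - a)|$ after the chain rule contributes the factor $p^{-|\multib|} \approx (\sidelengthvariable)^{-|\multib|}$. \Cref{lem:n-dim-estimate-Theta} bounds that lattice sum by $t^{-|\multib|} \prod_{i=1}^n (\psi(x_i/p) + 1)$. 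Summing over the $O(1)$ admissible dyadic values of $p$ yields exactly $(\sidelengthvariable t)^{-|\multib|} \Psi(x)$, as desired.

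The main obstacle — the step requiring the most care — is the bookkeeping in the first paragraph: making sure the Leibniz/quotient expansion of $\partial^\multia(\phi_k/S)$ really does produce at most $|\multia|+1$ differentiated factors total (one $\phi_k$ factor and at most $|\multia|$ factors that are either $\partial S$'s or powers of $1/S$), so that the final power of $\Psi$ is $|\multia|+1$ and not larger; one wants a clean combinatorial identity (e.g. an induction on $|\multia|$ applied to $\phi_k = \phi_k^* \cdot S$, or the Faà di Bruno expansion of $1/S$) rather than a loose count. A secondary point needing attention is verifying $S(x) \geq 1$ for all $x \in E^c$ (immediate, since the Whitney cubes cover $E^c$ and $\phi_Q \equiv 1$ on $Q$) and checking that the $p$-range in \eqref{eq:defn-psi-x} can indeed be chosen, by fixing $c_1', c_2'$, to contain the full interval $[\tfrac14\sidelengthvariable, 4\sidelengthvariable]$ of possible neighboring side lengths — both routine once the constants in \eqref{eq:delta-dist-comparable} are pinned down. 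Everything else is the chain rule and a finite sum over $O(1)$ scales.
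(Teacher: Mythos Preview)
Your proposal is correct and follows essentially the same route as the paper. The paper's proof is terser: it first records the bound $\sum_k|\partial^{\multia}\phi_k(x)|\le|\partial^{\multia}S(x)|\lesssim(\sidelengthvariable t)^{-|\multia|}\Psi(x)$ (exactly your step~(i), proved the same way via \Cref{lem:basic-whitney-cube-properties} and \Cref{lem:n-dim-estimate-Theta}), then states the intermediate consequence $|\partial^{\multia}(S^{-1})|\lesssim(\sidelengthvariable t)^{-|\multia|}\Psi(x)^{|\multia|}$ and finishes by Leibniz on $\phi_k\cdot S^{-1}$, whereas you unfold the quotient rule in one pass; the bookkeeping you flag as the main obstacle is handled identically in both arguments.
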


\begin{proof}
Observe that $\partial^\multia  \phi_k(x) \neq 0$ implies $x \in Q_k^*$. Then by \Cref{lem:basic-whitney-cube-properties}, the side lengths of $Q$ and $Q_k$ differ by at most a factor of 4. Thus, we have %
\begin{align}
\label{eq:estimates-derivative-S}
|\partial^\multia  S(x)| 
\leq 
\sum_k |\partial^\multia  \phi_k(x)|
\leq
\sum_{\frac{\sidelengthvariable}{4} \leq p \leq 4\sidelengthvariable}
\sum_{a \in \bbZ^n} 
\left|
\left(\frac{\partial}{\partial x}\right)^\multia\Theta\left(\frac{x-a}{p}\right)
\right|
,
\end{align}
where $p$ ranges over powers of $2$. By \eqref{eq:delta-dist-comparable}, there exist absolute constants $c_1'$ and $c_2'$ such that $\frac{\sidelengthvariable}{4} \leq p \leq 4\sidelengthvariable$ implies $c_1' \delta(x) n^{-1/2} \leq p \leq c_2'\delta(x) n^{-1/2}$. Then the right-hand side of \eqref{eq:estimates-derivative-S} is $\lesssim (st)^{-|\multia|} \Psi(x)$ by \Cref{lem:n-dim-estimate-Theta}. This implies that $\left|
\left(\frac{\partial}{\partial x}\right)^\multia\left[ S(x)^{-1} \right]
\right|
\lesssim
(\sidelengthvariable t)^{-|\multia|}
\Psi(x)
^{|\multia|}$, and the lemma follows.
\end{proof}

\subsection{The extension operator $\cE_m$}

Since $E$ is closed, for each $k$, we can find a point $p_k \in E$ such that $\dist(Q_k, E) = \dist(Q_k, p_k)$. Then for $f=(P_y) \in \cC^{m,\omega}(E)$, we set
\begin{align}
\label{eq:def-extm}
\cE_m(f)(x)
=
\begin{cases}
P_x(x) 
&\qquad\text{if } x \in E
\\
\sum_k' P_{p_k}(x) \phi_k^*(x)
&\qquad\text{if } x \in E^c.
\end{cases}
\end{align}
where the $\sum'$ means to sum over cubes $Q_k$ such that $\dist(Q_k, E) \leq 1$. The Whitney extension theorem asserts that $\cE_m$ is a linear extension operator $\cC^m(E) \to \cC^m(\bbR^n)$. To determine the growth of the norm of $\cE_m$ when $n$ increases, we need more precise estimates.

\begin{lem}
\label{lem:derivatives-of-extm}
Suppose $f = (P_y) \in \cC^{m}(E)$ with norm $\leq 1$. Let $\bigF = \cE_m(f)$. Let $x \in E^c$ and $a \in E$. Then
\begin{align}
\label{eq:stein-a}
|\bigF(x) - P_{a}(x)| 
&\lesssim 
n^m |x - a|^m
.
\\
\label{eq:stein-a-prime}
|\partial^{\multia}\bigF(x) - \partial^{\multia}P_{a}(x)| 
&\lesssim
n^{m+|\multia|/2} t^{-|\multia|} \Psi(x)^{|\multia|+1} |x-a|^{m-|\multia|}
&&\text{for } |\multia| \leq m.
\\
\label{eq:stein-b}
|\partial^{\multia}\bigF(x)|
&\lesssim
n^{m+|\multia|/2} t^{-|\multia|} \Psi(x)^{|\multia|+1}
&&\text{for } |\multia| \leq m.
\\
\label{eq:stein-b-prime}
|\partial^{\multia}\bigF(x)|
&\lesssim
n^{m+|\multia|/2} t^{-|\multia|} \Psi(x)^{|\multia|+1} \delta(x)^{- |\multia|} 
&&\text{for } |\multia| \geq m+1.
\end{align}
\end{lem}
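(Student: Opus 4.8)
The plan is to follow the structure of the standard proof (as in \cite{stein1970}), but to track the dimension-dependent constants carefully and to use \Cref{lem:derivatives-of-sum-phi-star} wherever the cutoff functions enter. I would prove the five estimates essentially in the order \eqref{eq:stein-a}, \eqref{eq:stein-a-prime}, \eqref{eq:stein-b}, \eqref{eq:stein-b-prime}, since the later ones reduce to the earlier ones.

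\emph{Estimate \eqref{eq:stein-a}.} Write $\bigF(x) - P_a(x) = \sum_k' \bigl(P_{p_k}(x) - P_a(x)\bigr)\phi_k^*(x)$, using that $\sum_k' \phi_k^*(x) = 1$ on the relevant region. For each surviving term, $x$ lies in $Q_k^*$, so by \eqref{eq:delta-dist-comparable} we have $|x - p_k| \lesssim \delta(x) \lesssim |x - a|$ (the last inequality because $a \in E$). Now estimate $|P_{p_k}(x) - P_a(x)|$ by Taylor-expanding: the difference of two Taylor polynomials from a Whitney field of norm $\leq 1$, evaluated at $x$, is controlled using \eqref{eq:defn-C-m-E-condition-1} together with the chain of points $a, p_k$ and the bound $|p_k - a| \lesssim |x - a|$. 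The only place $n$ enters is in counting: a monomial expansion of $(x - y)^\multia$ over $|\multia| \leq m$ produces combinatorial factors that are $O(n^m)$ (there are $\binom{n+m}{m} \lesssim n^m$ multi-indices of order $\leq m$). This gives the factor $n^m$ and no worse.

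\emph{Estimate \eqref{eq:stein-a-prime}.} Differentiate $\bigF(x) - P_a(x) = \sum_k' \bigl(P_{p_k}(x) - P_a(x)\bigr)\phi_k^*(x)$ by the Leibniz rule:
\begin{align}
\partial^\multia\bigl(\bigF - P_a\bigr)(x)
=
\sum_{\multib \leq \multia}
\binom{\multia}{\multib}
\sum_k'
\partial^{\multia - \multib}\bigl(P_{p_k} - P_a\bigr)(x)\,
\partial^\multib \phi_k^*(x).
\end{align}
For each fixed $\multib$, estimate $|\partial^{\multia - \multib}(P_{p_k} - P_a)(x)| \lesssim n^m |x-a|^{m - |\multia| + |\multib|}$ as in \eqref{eq:stein-a} (again using $|x - p_k| \approx \delta(x) \lesssim |x-a|$ on $Q_k^*$), and use \Cref{lem:derivatives-of-sum-phi-star} to bound $\sum_k |\partial^\multib \phi_k^*(x)| \lesssim (\sidelengthvariable t)^{-|\multib|}\Psi(x)^{|\multib|+1}$ where $\sidelengthvariable$ is the side length of the Whitney cube containing $x$. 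Since $\sidelengthvariable \approx \delta(x) n^{-1/2}$ by \eqref{eq:delta-dist-comparable}, we get $(\sidelengthvariable t)^{-|\multib|} \approx (n^{1/2}/ \delta(x))^{|\multib|} t^{-|\multib|}$, i.e.\ a factor $n^{|\multib|/2} t^{-|\multib|} \delta(x)^{-|\multib|}$; combined with $\delta(x)^{-|\multib|} |x-a|^{|\multib|} \leq$ (a constant, since $\delta(x) \leq |x-a|$)—wait, that inequality goes the wrong way, so instead absorb $\delta(x)^{-|\multib|} |x-a|^{m - |\multia| + |\multib|} = |x-a|^{m - |\multia|} (|x-a|/\delta(x))^{|\multib|}$; but on $Q_k^*$ we also have $|x - a| \lesssim \delta(x) + |p_k - a| \lesssim \delta(x)$ when... in fact $|x-a|$ can be much larger than $\delta(x)$. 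The correct route (following Stein) is: the $\multib = 0$ term is handled by \eqref{eq:stein-a} directly; for $\multib \neq 0$, use that on the support of $\partial^\multib\phi_k^*$ we have $\delta(x) \approx |x - p_k|$, and bound $|x - a| \leq |x - p_k| + |p_k - a| \lesssim \delta(x) \lesssim |x-a|$ only when $a$ is a nearest point—so one instead splits into $|x-a| \leq 2\delta(x)$ and $|x - a| > 2\delta(x)$; in the first regime everything is comparable, and in the second regime $P_{p_k} - P_a$ must be re-expanded around a nearest point $a_0 \in E$ to $x$, reducing to the first regime plus a $P_{a_0} - P_a$ term handled by \eqref{eq:defn-C-m-E-condition-1}. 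After this bookkeeping, summing over $\multib \leq \multia$ (at most $n^{|\multia|} \lesssim n^m$ terms, absorbed into the constant since $|\multia| \leq m$) yields $n^{m + |\multia|/2} t^{-|\multia|}\Psi(x)^{|\multia|+1}|x-a|^{m - |\multia|}$, using that $\Psi(x) \geq 1$ so $\Psi(x)^{|\multib|+1} \leq \Psi(x)^{|\multia|+1}$.

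\emph{Estimates \eqref{eq:stein-b} and \eqref{eq:stein-b-prime}.} For $|\multia| \leq m$, write $\partial^\multia\bigF(x) = \partial^\multia P_a(x) + \partial^\multia(\bigF - P_a)(x)$; choosing $a$ to be a nearest point of $E$ to $x$ so that $|x - a| = \delta(x) \lesssim 1$ (the sum $\sum'$ is empty otherwise), the first term is $\lesssim n^m$ by \eqref{eq:defn-C-m-E-condition-1} and the monomial count, and the second is controlled by \eqref{eq:stein-a-prime} with $|x-a|^{m-|\multia|} \lesssim 1$; combining gives \eqref{eq:stein-b}. For $|\multia| \geq m+1$, all derivatives of $P_a$ of order $> m$ vanish, so $\partial^\multia\bigF(x) = \partial^\multia(\bigF - P_a)(x) = \sum_{\multib \leq \multia, |\multib| \leq |\multia| - 0}\binom{\multia}{\multib}\sum_k' \partial^{\multia - \multib}(P_{p_k} - P_a)(x)\partial^\multib\phi_k^*(x)$, where only $\multib$ with $|\multia - \multib| \leq m$ contribute (higher derivatives of the polynomials vanish); the same estimates as before give a factor $(\sidelengthvariable t)^{-|\multib|} \approx n^{|\multib|/2} t^{-|\multib|}\delta(x)^{-|\multib|}$ with $|\multib| = |\multia| - |\multia - \multib| \geq |\multia| - m$, and after using $|x - p_k| \approx \delta(x)$ and summing one obtains \eqref{eq:stein-b-prime}. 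Here the counting over multi-indices $\multib$ of order up to $|\multia|$ contributes $n^{O(|\multia|)}$, but since the implied constant is allowed to depend on $|\multia|$ (\Cref{rem:implied-constants}), this is harmless.

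\emph{Main obstacle.} The delicate point is \eqref{eq:stein-a-prime}: one must bound $|\partial^{\multia-\multib}(P_{p_k} - P_a)(x)|$ uniformly over the surviving cubes $Q_k$, and the bound $|x - p_k| \lesssim |x - a|$ only holds cleanly when $a$ is (comparable to) a nearest point of $E$ to $x$. For general $a \in E$ with $|x-a| \gg \delta(x)$ one must interpose a nearest point and invoke the finiteness condition \eqref{eq:defn-C-m-E-condition-1} for the polynomial difference on $E$ — this is exactly the standard argument in \cite{stein1970}, and the only new work is verifying that the combinatorial factors from the Leibniz rule and the monomial expansions are polynomial in $n$ (they are $O(n^m)$ for $|\multia| \leq m$) and that the $n^{1/2}$ from $\sidelengthvariable \approx \delta(x) n^{-1/2}$ accounts precisely for the $n^{|\multia|/2}$ appearing in the statement.
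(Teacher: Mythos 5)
Your plan is the right one and matches the paper's: the paper's proof is literally a citation to Stein's Section 2.3.2 together with the observations that the multinomial identity $\sum_{|\multib|=r}\frac{1}{\multib!}=\frac{n^r}{r!}$, the comparison $\sidelengthvariable\approx\delta(x)n^{-1/2}$ from \eqref{eq:delta-dist-comparable}, and \Cref{lem:derivatives-of-sum-phi-star} track the $n$- and $t$-dependence. Your argument for \eqref{eq:stein-a}, and your identification of $\sidelengthvariable^{-|\multib|}\approx n^{|\multib|/2}\delta(x)^{-|\multib|}$ as the source of the half-integer power, are correct. However, there are two real problems in your write-up.

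First, you repeatedly misjudge the size of the Leibniz sum and, worse, misinterpret the convention on implied constants. The set $\{\multib:\multib\leq\multia\}$ has $\prod_i(\multia_i+1)\leq 2^{|\multia|}$ elements, which is bounded independently of $n$; it is \emph{not} of size $n^{|\multia|}$. You appear to have conflated it with the set of multi-indices of order at most $|\multia|$. Your attempted rescue --- ``at most $n^{|\multia|}\lesssim n^m$ terms, absorbed into the constant since $|\multia|\leq m$'' and ``contributes $n^{O(|\multia|)}$, but \ldots\ this is harmless'' --- is not valid: by \Cref{rem:implied-constants} the implied constants may depend on $m$ and $|\multia|$ but \emph{not} on $n$, so a genuine factor of $n^{|\multia|}$ could not be absorbed and would destroy the claimed exponent $m+|\multia|/2$. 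The lemma survives only because the count is actually $O(1)$, not because $n^{|\multia|}$ is harmless; as written, your justification is wrong in a way that would break the theorem if it were needed.

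Second, your treatment of the regime $|x-a|>2\delta(x)$ in \eqref{eq:stein-a-prime} is imprecise in a way that matters. After interposing a nearest point $a_0\in E$ and writing $P_{p_k}-P_a=(P_{p_k}-P_{a_0})+(P_{a_0}-P_a)$, you say the $P_{a_0}-P_a$ contribution is ``handled by \eqref{eq:defn-C-m-E-condition-1}.'' If you estimate $\partial^{\multia-\multib}(P_{a_0}-P_a)(x)$ directly from the $\cC^m(E)$ norm you get roughly $|x-a|^{m-|\multia|+|\multib|}$; multiplying by $\sum_k|\partial^\multib\phi_k^*(x)|\lesssim(\sidelengthvariable t)^{-|\multib|}\Psi(x)^{|\multib|+1}$ leaves an uncontrolled factor of $(|x-a|/\delta(x))^{|\multib|}$. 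The step that actually makes this term disappear is the cancellation $\sum_k'\partial^\multib\phi_k^*(x)=0$ for $\multib\neq 0$ (since $\sum_k'\phi_k^*\equiv 1$ near $E$), so that the $k$-independent factor $\partial^{\multia-\multib}(P_{a_0}-P_a)(x)$ contributes nothing at all. You gesture at Stein, so you may well have had this in mind, but your stated reason would not close the estimate.

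Aside from these two points --- of which the first is a genuine conceptual error about where $n$-dependence is and is not allowed, and the second is a misidentified mechanism --- your proposal tracks the same quantities the paper tracks and would yield the stated bounds.
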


\begin{proof}
The proof is identical to the one given in \cite[Section 2.3.2]{stein1970}. To track the dependence on $n$ and $t$, we note the identity $\sum_{|\multib| = r} \frac{1}{\multib!} = \frac{n^r}{r!}$ and use \eqref{eq:delta-dist-comparable} and \Cref{lem:derivatives-of-sum-phi-star} whenever necessary.
\end{proof}

\section{Averaging}\label{sec:averaging}

In this section, we introduce a parameter $b \in \bbR^n$ to $\cE_m$ and discuss averaging over this parameter. 

\begin{defn}
Let $\tau > 0$. We say a function $F : \bbR^n \to \bbR$ is \emph{periodic with period $\tau$} if $x - x' \in \tau\bbZ^n$ implies $F(x) = F(x')$.
\end{defn}

\begin{defn}
Let $B \subset \bbR^n$ be a measurable set. Then for any function $g : B \to \bbR$, we denote the average of $g$ on $B$ by 
$
\<g(b)\>_B = \frac{1}{|B|} \int_B g(b) \, db. 
$
Furthermore, if $g : \bbR^n \to \bbR$ is periodic with period $\tau$, we set 
$
\< g(b) \> 
=
\<g(b)\>_{[0,\tau]^n}
.
$
\end{defn}

\subsection{The dependence of $\cE_m$ on the choice of origin}

The Whitney cube partition of $E^c$ depends on the choice of origin since only certain cubes in $\bbR^n$ are dyadic, and only dyadic cubes are allowed to be Whitney cubes. As a result, the extension operator $\cE_m$ depends on the choice of origin. So far, we have been working with respect to the standard origin $0=(0, \ldots, 0)$. To make this dependence explicit, we rewrite \eqref{eq:def-extm} as
\begin{align}
\label{eq:def-extm0}
\cE_{m,[0]}(f)(x)
=
\sum_k{}' P_{p_{k,[0]}}(x) \phi_{k,[0]}^*(x)
\qquad
\text{if } x \in E^c
.
\end{align}
To denote an extension operator constructed with respect to another origin $b \in \bbR^n$, we replace all the $[0]$ subscripts in \eqref{eq:def-extm0} with $[b]$.

When shifting the origin, the proper analogue of $\Psi(x)$ is 
\begin{align}
\label{eq:psi-b-x-delta}
\Psi_{[b]}(x) 
=
\sum_{c_1' \delta(x) n^{-1/2} \ \leq \ p \  \leq \  c_2'\delta(x) n^{-1/2}}
\left[
\prod_{i=1}^n \left( 
\psi\left(\frac{x_i-b_i}{p}\right) + 1
\right)
\right]
.
\end{align}
The $x-b$ arises because the dyadic cubes for the origin $b$ can by obtained by translating the dyadic cubes for the standard origin by $b$. Thus, we have the following version of \eqref{eq:stein-b} and \eqref{eq:stein-b-prime}.

\begin{lem}
\label{lem:derivatives-of-F-b}
Suppose $f = (P_y) \in \cC^{m}(E)$ with norm $\leq 1$. Let $\bigF_{[b]} = \cE_{m,[b]}(f)$. Then
\begin{align}
|\partial^{\multia}\bigF_{[b]}(x)|
&\lesssim
n^{m+|\multia|/2} t^{-|\multia|} \Psi_{[b]}(x)^{|\multia|+1}
&&\text{for } x \in E^c, |\multia| \leq m.
\\
|\partial^{\multia}\bigF_{[b]}(x)|
&\lesssim
n^{m+|\multia|/2} t^{-|\multia|} \Psi_{[b]}(x)^{|\multia|+1} \delta(x)^{- |\multia|}
&&\text{for } x \in E^c, |\multia| \geq m+1.
\end{align}
\end{lem}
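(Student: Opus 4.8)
The plan is to reduce \Cref{lem:derivatives-of-F-b} to \Cref{lem:derivatives-of-extm} by a change of variables that moves the origin $b$ back to $0$. The key point, already signaled in the text, is that the entire Whitney cube construction with origin $b$ applied to $E$ is the image under translation by $b$ of the construction with the standard origin $0$ applied to the translated set $E - b$. So everything will follow once we check that each ingredient of $\cE_m$ transforms covariantly under translation.

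Concretely, given $f = (P_y)_{y \in E} \in \cC^{m}(E)$ with norm $\leq 1$, I would introduce the translated Whitney field $\tilde f = (\tilde P_z)_{z \in E - b}$ on the compact set $E - b$ by $\tilde P_z(x) = P_{z+b}(x+b)$. Since the conditions \eqref{eq:defn-C-m-E-condition-1} and \eqref{eq:defn-C-m-E-lim-0} are unchanged under simultaneously translating the index set and the argument, $\tilde f \in \cC^m(E-b)$ with norm $\leq 1$ as well. Next I would verify that the Whitney cubes $Q_{k,[b]}$ for $E$ with origin $b$ are exactly $b + \tilde Q_k$, where $\{\tilde Q_k\}$ are the Whitney cubes for $E - b$ with origin $0$; that the selected nearest points satisfy $p_{k,[b]} = b + \tilde p_k$; and that the normalized cutoffs satisfy $\phi_{k,[b]}^*(x) = \tilde\phi_k^*(x - b)$. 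Substituting these into \eqref{eq:def-extm0} and comparing term by term with the definition of $\cE_{m,[0]}(\tilde f)$ yields
\begin{align}
\cE_{m,[b]}(f)(x) = \cE_{m,[0]}(\tilde f)(x - b)
\qquad\text{for } x \in E^c,
\end{align}
and hence $\partial^\multia \bigF_{[b]}(x) = \big(\partial^\multia \cE_{m,[0]}(\tilde f)\big)(x - b)$.

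It then remains to apply \eqref{eq:stein-b} and \eqref{eq:stein-b-prime} of \Cref{lem:derivatives-of-extm} to $\cE_{m,[0]}(\tilde f)$ at the point $x - b \in (E-b)^c$ and translate the auxiliary quantities. Writing $\tilde\delta$ and $\tilde\Psi$ for the functions $\delta$ and $\Psi$ associated to $E - b$, one has $\tilde\delta(x - b) = \dist(x - b, E - b) = \delta(x)$, so the range of the sum over $p$ defining $\tilde\Psi(x - b)$ coincides with that defining $\Psi_{[b]}(x)$; and the products match factor by factor because $\psi\big((x_i - b_i)/p\big)$ is exactly the $i$-th factor in both. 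Thus $\tilde\Psi(x - b) = \Psi_{[b]}(x)$, and inserting these identities into the conclusions of \Cref{lem:derivatives-of-extm} produces precisely the two claimed bounds.

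The only real work is the bookkeeping in the second paragraph — confirming that the cube partition, the chosen points $p_k$, the partition of unity, and the functions $\delta$ and $\Psi$ all transform covariantly under translation — and none of these verifications goes beyond the defining properties of the Whitney decomposition, so I do not anticipate a genuine obstacle. The main thing to watch is keeping the direction of the shift ($b$ versus $-b$) consistent throughout.
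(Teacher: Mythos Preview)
Your proposal is correct and is essentially a careful unpacking of what the paper does: the paper gives no explicit proof of \Cref{lem:derivatives-of-F-b} beyond the sentence ``the dyadic cubes for the origin $b$ can be obtained by translating the dyadic cubes for the standard origin by $b$,'' after which the lemma is simply declared as the version of \eqref{eq:stein-b}--\eqref{eq:stein-b-prime} for the shifted origin. Your translation $E \mapsto E-b$ and the identity $\cE_{m,[b]}(f)(x) = \cE_{m,[0]}(\tilde f)(x-b)$ formalize exactly this; the only caveat is that the nearest points $p_k$ need not be unique, so the displayed identity should be read as ``one admissible choice of $\cE_{m,[b]}(f)$ equals one admissible choice of $\cE_{m,[0]}(\tilde f)(\cdot-b)$,'' which is all that is needed for the bounds.
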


\subsection{The averaged extension operator $\< \cE_m \>$}

We define the operator $\<\cE_{m}\>$ acting on $\cC^{m}(E)$ as follows:
\begin{align}
\label{eq:defn-avg-extm}
\<\cE_{m}\>(f)(x)
=
\<\cE_{m,[b]}(f)(x)\>
.
\end{align}
The definition above requires that the map $b \mapsto \cE_{m,[b]}(f)(x)$ be periodic for all $x$. We show that this is indeed the case.

\begin{lem}
\label{lem:periodic-in-b}
Fix $x \in E^c$. Then there exists $\tau > 0$ and a neighborhood $N$ of $x$ such that $\tau$ is a power of $2$ and for each $y \in N$, the function $b \mapsto \cE_{m,[b]}(f)(y)$ is periodic of period $\tau$. 
\end{lem}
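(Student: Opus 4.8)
The plan is to exploit the dyadic structure of the Whitney cube partition together with the fact that, locally near a fixed $x \in E^c$, only finitely many scales of Whitney cubes are relevant and each such cube is much larger than some fixed dyadic length. Concretely, fix $x \in E^c$ and set $\delta = \delta(x) > 0$. By \Cref{lem:basic-whitney-cube-properties}, any Whitney cube meeting a small ball $N$ around $x$ has side length comparable to $\delta n^{-1/2}$ (up to absolute constants), so there is a smallest dyadic number $\tau$ — a power of $2$, and we may take $\tau$ as large as we like subject to being small enough — with the property that every Whitney cube (for every origin $[b]$) that can influence $\cE_{m,[b]}(f)(y)$ for $y \in N$ has side length at least $\tau$. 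I would first pin down $N$ and $\tau$ using \eqref{eq:delta-dist-comparable}: choose $N$ to be a ball of radius $\ll \delta$, choose $\tau$ a power of $2$ with $\tau \lesssim \delta n^{-1/2}$, and note that the only cubes contributing to the sum in \eqref{eq:def-extm0} (after replacing $[0]$ by $[b]$) are those whose dilate $Q_{k,[b]}^*$ meets $N$, all of which have side length $\geq \tau$.

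The key observation is then that translating the origin by an element of $\tau \bbZ^n$ permutes the set of dyadic cubes of side length $\geq \tau$ among themselves, and does so in a way that is the identity on any such cube that already lies ``close to'' $N$ in the relevant sense. More precisely, the dyadic cubes for origin $b$ are exactly the translates by $b$ of the standard dyadic cubes, so the dyadic cubes for origin $b + v$ with $v \in \tau\bbZ^n$ coincide — as a collection — with the dyadic cubes for origin $b$, provided we only look at cubes of side length $\geq \tau$ (since a side-length-$\tau$ cube has vertices in $\tau\bbZ^n$ relative to its origin, adding $v \in \tau\bbZ^n$ leaves the lattice invariant; for larger side lengths the lattice is coarser and the same holds a fortiori). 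Therefore the Whitney cube partition restricted to a neighborhood of $N$ is literally unchanged, which forces the cubes $Q_{k,[b]}$, their dilates $Q_{k,[b]}^*$, their cutoff functions $\phi_{k,[b]}$, the partition of unity $\phi_{k,[b]}^*$, and the chosen points $p_{k,[b]} \in E$ all to be the same for origin $b$ and origin $b+v$, hence $\cE_{m,[b]}(f)(y) = \cE_{m,[b+v]}(f)(y)$ for all $y \in N$.

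In carrying this out I would proceed in the following order: (1) fix $x$, set $\delta = \delta(x)$, and choose $N = B(x, r)$ with $r$ a small absolute fraction of $\delta$; (2) invoke \eqref{eq:delta-dist-comparable} to show every Whitney cube $Q_k$ with $Q_k^* \cap N \neq \emptyset$ has side length $\approx \delta n^{-1/2}$, and let $\tau$ be a power of $2$ below the minimum such side length over all origins (this minimum is bounded below by an absolute constant times $\delta n^{-1/2}$, so such a $\tau > 0$ exists); (3) check that the sum defining $\cE_{m,[b]}(f)(y)$ for $y \in N$ involves only cubes of side length $\geq \tau$, using both the dilate-intersection condition and the $\dist(Q_k, E) \leq 1$ truncation; (4) observe that the standard dyadic cubes of side length $\geq \tau$ form a collection invariant under translation by $\tau\bbZ^n$, so the Whitney partition near $N$ — which is constructed canonically from the dyadic grid and $E$ — is unchanged when $b \mapsto b + v$, $v \in \tau\bbZ^n$; (5) conclude the cutoff functions, partition of unity, and the points $p_{k,[b]}$ agree, so $\cE_{m,[b]}(f)(y) = \cE_{m,[b+v]}(f)(y)$, i.e. $b \mapsto \cE_{m,[b]}(f)(y)$ is $\tau\bbZ^n$-periodic.

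The main obstacle is step (4): one must be careful that the Whitney cube partition is genuinely determined by the dyadic grid and the set $E$ in a translation-equivariant way, and in particular that the selection of which dyadic cubes become Whitney cubes (a choice that depends on comparing $\dist(Q, E)$ to $\diam(Q)$) is unaffected near $N$ — this is fine because that selection rule is local and scale-comparison based, but it relies on the relevant cubes all having side length $\geq \tau$ so that translation by $\tau\bbZ^n$ maps dyadic cubes to dyadic cubes bijectively. A secondary subtlety is that the set of contributing cubes could in principle depend on $b$; the fix is to take $N$ small enough (and $\tau$ correspondingly small) that the contributing cubes for \emph{every} $b$ all have side length $\geq \tau$, which is possible precisely because of the uniform two-sided bound on side lengths in \eqref{eq:delta-dist-comparable} with absolute implied constants.
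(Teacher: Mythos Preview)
Your overall strategy is right, but step~(4) contains a genuine error in the direction of the inequality for $\tau$. You take $\tau$ to be a power of $2$ \emph{below} the minimum side length of the relevant Whitney cubes and then argue that translation by $v \in \tau\bbZ^n$ preserves the dyadic grid at every scale $s \geq \tau$ ``a fortiori'' because the lattice is coarser. This is backwards. The dyadic cubes of side length $s$ (for origin $b$) have vertices in $b + s\bbZ^n$, and the collection is invariant under $b \mapsto b+v$ precisely when $v \in s\bbZ^n$, i.e.\ when $s$ \emph{divides} $\tau$. A coarser lattice is \emph{harder}, not easier, to preserve under translation by an element of a finer lattice: for instance, with $\tau = 1$ and $s = 2$, the vector $v = (1,0,\ldots,0) \in \tau\bbZ^n$ shifts the side-length-$2$ dyadic grid to a genuinely different grid. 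Consequently your choice of $\tau$ does not make the Whitney partition near $N$ invariant, and the periodicity claim fails.

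The fix is simply to reverse the inequality: choose $\tau$ to be a power of $2$ that is at least the \emph{maximum} side length of any Whitney cube $Q$ (for any origin) with $Q^* \cap N \neq \emptyset$. By \eqref{eq:delta-dist-comparable} this maximum is bounded above by an absolute constant times $\delta n^{-1/2}$, so such a $\tau$ exists; then $\tau\bbZ^n \subset s\bbZ^n$ for every relevant $s$, and translation by $v \in \tau\bbZ^n$ genuinely permutes (in fact fixes) all the dyadic cubes that matter near $N$. This is exactly what the paper does, taking $\tau = 16\sidelengthvariable_{[0]}$ where $\sidelengthvariable_{[0]}$ is the side length of a Whitney cube (for the standard origin) containing $x$, so that $\tau$ is a multiple of every side length in $\{\sidelengthvariable_{[0]}/4, \ldots, 4\sidelengthvariable_{[0]}\}$. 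With that correction, the rest of your outline goes through.
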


\begin{proof}
Fix an $x \in E^c$ and let $\sidelengthvariable_{[0]}$ be the side length of a Whitney cube (for the standard choice of origin) that contains $x$. For any other choice of origin, any Whitney cube that contains $x$ will have side length between $\frac{1}{4}\sidelengthvariable_{[0]}$ and $4\sidelengthvariable_{[0]}$.

We claim that if we translate the origin $b$ by any element of $16\sidelengthvariable_{[0]} \bbZ^n$, the Whitney cube partition ``near $x$'' will not change. More precisely, suppose $b - b' \in 16\sidelengthvariable_{[0]} \bbZ^n$ and consider the two partitions $\{Q_{k,[b]}\}_k$ and $\{Q_{k,[b']}\}_k$. We claim that if $Q \in \{Q_{k,[b]}\}_k$ and $x \in Q^*$, then $Q \in \{Q_{k,[b']}\}_k$. This follows from how the Whitney cubes are constructed -- see the discussion on maximal cubes in\cite[Section 1.2]{stein1970}. 

Thus, $b \mapsto \cE_{m,[b]}(f)(x)$ is periodic with period $16\sidelengthvariable_{[0]}$. (Recall that $x$ is fixed and that $\sidelengthvariable_{[0]}$ depends on $x$.) Since $\sidelengthvariable_{[0]}$ is comparable with $\delta(x)$, we can extend this result to a neighborhood of $x$.
\end{proof}

\begin{lem}
\label{lem:avg-ext-maps-Cm-omega}
$\< \cE_m \>$ is an extension operator $\cC^{m}(E) \to \cC^{m}(\bbR^n)$. Furthermore, 
\begin{align}
\label{eq:derivative-under-average}
\left(\frac{\partial}{\partial x}\right)^{\multia} \< \cE_m \> (f)(x) 
= 
\< \left(\frac{\partial}{\partial x}\right)^{\multia} \cE_{m,[b]} (f)(x)  \>
.
\end{align}
\end{lem}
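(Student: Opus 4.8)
The plan is to verify the two assertions of \Cref{lem:avg-ext-maps-Cm-omega} in order: first that $\<\cE_m\>(f)$ is a $\cC^m(\bbR^n)$ function extending $f$, and then that differentiation commutes with averaging as in \eqref{eq:derivative-under-average}. The natural strategy is to reduce everything to the behavior of the family $\{\cE_{m,[b]}(f)\}_{b}$, which by \Cref{lem:periodic-in-b} is, locally in $x$, a family of smooth functions on $E^c$ depending on the parameter $b$ over a compact period box. Since $\cE_{m,[b]}(f) = f$ on $E$ for every $b$ (this is part of \eqref{eq:defn-extension-operator} for each fixed origin), the average also agrees with $f$ on $E$; in particular $F|_E = f$ in the sense of Whitney fields. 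The content of the lemma is therefore the regularity of $F = \<\cE_m\>(f)$ across the boundary of $E$ and the differentiation-under-the-integral formula on $E^c$.

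First I would handle \eqref{eq:derivative-under-average} on the open set $E^c$. Fix $x_0 \in E^c$. By \Cref{lem:periodic-in-b} there is a neighborhood $N$ of $x_0$ and a period $\tau$ (a power of $2$) so that for every $y \in N$ the map $b \mapsto \cE_{m,[b]}(f)(y)$ is $\tau$-periodic, so the average over $[0,\tau]^n$ is well-defined and equals $\<\cE_{m,[b]}(f)(y)\>$. To differentiate under the integral sign I need a uniform (in $b \in [0,\tau]^n$) local bound on $\partial^\multia_x \cE_{m,[b]}(f)(y)$ for $y$ in a slightly smaller neighborhood and for all orders $|\multia|$, including $|\multia| > m$. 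This is exactly what \Cref{lem:derivatives-of-F-b} supplies: for $y$ near $x_0$ we have $\delta(y) \approx \delta(x_0)$ bounded away from $0$, and $\Psi_{[b]}(y)$ is bounded by a constant depending only on $n$ and the parameters (the sum defining $\Psi_{[b]}$ has boundedly many terms, each a product of $n$ factors each at most $2$, so $\Psi_{[b]}(y) \le 2^n \cdot (\text{number of scales})$) — crucially this bound is uniform in $b$. Hence $\partial^\multia_x \cE_{m,[b]}(f)(y)$ is bounded uniformly in $b$, locally uniformly in $y$, for each fixed $\multia$; standard theorems on differentiating a parameter integral (applied one coordinate of $x$ at a time, inductively on $|\multia|$) then give \eqref{eq:derivative-under-average} for all $\multia$ at $x_0$, and in particular show $F$ is $\cC^\infty$ on $E^c$.

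Next I would establish that $F$ is $\cC^m$ on all of $\bbR^n$. The only issue is at points of $E$ and at the interface. Here I would invoke Glaeser's version of the Whitney extension theorem (as flagged in the Remark and referenced via \cite{glaeser, stein1970}): to show a function which equals a Whitney field $(P_y)$ on $E$ and is smooth on $E^c$ is actually $\cC^m$, it suffices to show that near each $a \in E$ it is well-approximated by $P_a$, i.e. that $|\partial^\multia F(x) - \partial^\multia P_a(x)| = o(|x-a|^{m-|\multia|})$ as $x \to a$, for $|\multia| \le m$, uniformly — and more precisely that $F \in \cC^{m,\omega}$ for a suitable modulus $\omega$, which is what makes Glaeser's criterion apply. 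For each fixed origin $b$, $\cE_{m,[b]}(f)$ satisfies such estimates with constants controlled by \eqref{eq:stein-a}, \eqref{eq:stein-a-prime} of \Cref{lem:derivatives-of-extm} (in their $[b]$-form), where the right-hand sides involve $\Psi_{[b]}(x)^{|\multia|+1}$, again bounded uniformly in $b$ by the crude $2^n$-type bound above. Using \eqref{eq:derivative-under-average} on $E^c$ together with $\partial^\multia F = \partial^\multia P_a$ on $E$, I average these pointwise estimates over $b \in [0,\tau]^n$ — the period $\tau$ can be taken locally constant in $x$ by \Cref{lem:periodic-in-b}, and one checks the averaged bound is consistent across overlapping neighborhoods — to conclude that $F$ itself obeys the same Whitney-type decay near $E$. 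Glaeser's theorem (equivalently, the argument in \cite[Section 3]{stein1970}) then certifies $F \in \cC^m(\bbR^n)$, completing the proof.

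The main obstacle I anticipate is the bookkeeping around \Cref{lem:periodic-in-b}: the period $\tau$ and neighborhood $N$ depend on the point $x$, so to get a genuine global statement one must check compatibility of the averages on overlapping neighborhoods (two different period boxes both compute the same average of a function periodic in $b$, because the average of a periodic function over any full period box is the same) and then patch. A secondary technical point is justifying differentiation under the integral for \emph{all} orders $|\multia|$ at once near a fixed $x_0 \in E^c$: this is where \Cref{lem:derivatives-of-F-b}'s estimate for $|\multia| \ge m+1$ is essential, and one should be slightly careful that the $\delta(x)^{-|\multia|}$ factor is harmless since $\delta$ is bounded below on a neighborhood of $x_0$. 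Neither of these is deep, but they are the places where the argument could go wrong if stated carelessly.
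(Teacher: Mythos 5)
Your plan is in the right spirit---you correctly identify that \Cref{lem:periodic-in-b} lets you localize the average, that uniform-in-$b$ bounds are what is needed to differentiate under the integral on $E^c$, and that Glaeser's theorem is the right tool at the boundary---but there is a genuine gap in the boundary step, and the paper's actual argument is both shorter and avoids the gap by a different route.

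The gap: to certify that $F = \<\cE_m\>(f)$ is $\cC^m$ across $\partial E$, it is \emph{not} enough to know the big-$O$ Whitney bound $|\partial^\alpha F(x) - \partial^\alpha P_a(x)| \lesssim C\,|x-a|^{m-|\alpha|}$; one needs the little-$o$ statement in \eqref{eq:defn-C-m-E-lim-0} (for $m=0$ the $O(1)$ bound says nothing about continuity of $F$ at $a$; for general $m$ the same issue appears for the top-order derivatives). You cite \eqref{eq:stein-a} and \eqref{eq:stein-a-prime} of \Cref{lem:derivatives-of-extm}, but those are pure $O$-bounds with constants, so they do not by themselves trigger Whitney's or Glaeser's criterion. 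You do gesture at the right fix (``more precisely that $F \in \cC^{m,\omega}$''), but you never produce the $\omega$ nor explain why each $F_{[b]}$ lies in $\cC^{m,\omega}$ with a uniform bound, which is the crux.

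The paper's proof closes this in one move. Since $E$ is \emph{compact}, membership in $\cC^m(E)$ automatically upgrades to $\cC^{m,\omega}(E)$ for some modulus $\omega$ (the sup in \eqref{eq:defn-C-m-E-condition-1} together with the limit \eqref{eq:defn-C-m-E-lim-0} and compactness produce a genuine modulus of continuity). Then Glaeser's $\cC^{m,\omega}$ extension theorem gives $F_{[b]} \in \cC^{m,\omega}(\bbR^n)$ for every $b$, and because shifting the origin is an isometry of $\bbR^n$ the operator norm of $\cE_{m,[b]}$ on $\cC^{m,\omega}$ is independent of $b$; hence $\sup_b \|F_{[b]}\|_{\cC^{m,\omega}(\bbR^n)} < \infty$. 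A uniformly $\cC^{m,\omega}$-bounded family has equicontinuous derivatives up to order $m$, which simultaneously yields $F \in \cC^{m,\omega}(\bbR^n) \subset \cC^m(\bbR^n)$ \emph{and} the differentiation-under-the-average identity \eqref{eq:derivative-under-average}, with no separate argument on $E^c$ required. In short: your two-step plan can be made to work, but only after redoing the estimates of \Cref{lem:derivatives-of-extm} in $\omega$-form and importing the compactness observation; the paper sidesteps both by working in $\cC^{m,\omega}$ from the outset.
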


\begin{proof}
Let $f \in \cC^m(E)$, let $F_{[b]} = \cE_{m,[b]}(f)$, and let $F = \<\cE_m\>(f)$. Since $E$ is compact, $f \in \cC^{m,\omega}(E)$ for some modulus of continuity $\omega$. Glaeser's version of the Whitney extension theorem \cite{glaeser} implies that $F_{[b]} \in \cC^{m,\omega}(\bbR^n)$ for all $b \in \bbR^n$. Furthermore, $\sup_{b} \|F_{[b]}\|_{\cC^{m, \omega}(\bbR^n)} < \infty$. It follows that $F \in \cC^{m, \omega}(\bbR^n)$ and that \eqref{eq:derivative-under-average} holds.
\end{proof}

\subsection{Averages of $\Psi_{[b]}(x)$.} 
\label{subsec:avg-psi-b}

By averaging, we replace the sharply-peaked $\Psi_{[b]}(x)$ with something that is easier to control.

\begin{lem}
\label{lem:avg-of-Psi}
Fix $x \in E^c$ and $k \in \bbZ_{> 0}$. Then 
$
\<
\Psi_{[b]}(x)^{k}
\>
\lesssim
(1 + t2^{k+1})^n
.
$
The implied constant is independent of $n$ and $t$ (as usual) as well as of $x$, but will depend on $k$.
\end{lem}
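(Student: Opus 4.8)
The plan is to reduce the estimate to a product of one-dimensional averages, exploiting the product structure of $\Psi_{[b]}(x)^k$ that comes from the product over $i = 1, \ldots, n$ in \eqref{eq:psi-b-x-delta}. First I would write out $\Psi_{[b]}(x)^k$ as a $k$-fold sum over tuples $(p^{(1)}, \ldots, p^{(k)})$ of dyadic scales, each lying in the window $[c_1'\delta(x)n^{-1/2}, c_2'\delta(x)n^{-1/2}]$; since $c_2'/c_1'$ is an absolute constant, there are $O(1)$ admissible dyadic values of $p$, so there are $O(1)$ such tuples (the implied constant depending on $k$). For each fixed tuple, the summand factors as $\prod_{i=1}^n \prod_{j=1}^k \bigl(\psi(\tfrac{x_i - b_i}{p^{(j)}}) + 1\bigr)$, and because the $b_i$ are independent coordinates, the average over $b \in [0,\tau]^n$ (where $\tau$ is the period from \Cref{lem:periodic-in-b}, a power of $2$ that we may take to be a large power of $2$ and in particular divisible by every admissible $p^{(j)}$) splits as a product of $n$ identical one-dimensional averages.

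So it suffices to bound, for a single real coordinate, the quantity $\bigl\langle \prod_{j=1}^k (\psi(\tfrac{u - c}{p^{(j)}}) + 1) \bigr\rangle_{c \in [0,\tau]}$. Expanding the product over $j$ into $2^k$ terms, each term is a product of some subset of the $\psi(\tfrac{u-c}{p^{(j)}})$ factors; since $0 \le \psi \le 1$, each such product is bounded pointwise by the single factor $\psi(\tfrac{u-c}{p})$ with $p$ the \emph{largest} scale appearing in that subset (or by $1$ for the empty subset). The average of $\psi(\tfrac{u-c}{p})$ over a period is exactly the proportion of $c$ for which $\tfrac{u-c}{p}$ is within $t$ of an integer, which is $2t$ (using that $p \mid \tau$). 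Thus the one-dimensional average is at most $1 + \binom{k}{1} 2t + \binom{k}{2} 2t + \cdots = 1 + (2^k - 1)\cdot 2t \le 1 + t 2^{k+1}$. Raising to the $n$th power and summing over the $O(1)$ tuples of scales gives $\langle \Psi_{[b]}(x)^k \rangle \lesssim (1 + t2^{k+1})^n$, with the implied constant depending only on $k$ (through the number of scale-tuples), as claimed.

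The one genuine subtlety — the part I would be most careful about — is the interaction between the period $\tau$ of \Cref{lem:periodic-in-b} and the dyadic scales $p^{(j)}$: I need $\langle \psi(\tfrac{u-c}{p}) \rangle = 2t$ to hold exactly, which requires $p$ to divide the period over which I am averaging. Since \Cref{lem:periodic-in-b} only guarantees \emph{some} power-of-$2$ period $\tau$ (depending on $x$), and the relevant scales $p$ are comparable to $\delta(x) n^{-1/2}$ hence bounded, I would note that averaging a periodic function over $[0,\tau]^n$ equals averaging it over $[0, N\tau]^n$ for any integer $N$, so I may replace $\tau$ by a large enough power of $2$ that exceeds (and hence is divisible by) every admissible $p^{(j)}$; then every one-dimensional average factor is computed over an integer number of periods of $\psi(\tfrac{\cdot - c}{p})$ and equals $2t$ on the nose. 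Everything else is the routine expansion and the pointwise bound $0 \le \psi \le 1$ described above.
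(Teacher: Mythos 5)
Your proof is correct, and its skeleton matches the paper's: expand $\Psi_{[b]}(x)^k$ over $k$-tuples of dyadic scales in the window $[c_1'\delta(x)n^{-1/2},c_2'\delta(x)n^{-1/2}]$, note there are $O_k(1)$ such tuples because $c_2'/c_1'$ is absolute, use the independence of the coordinates $b_i$ and the product structure of the domain to factor the $n$-fold average into a product of $n$ one-dimensional averages, and bound each one-dimensional average by $1+t2^{k+1}$. The genuinely different step is how you treat $\<\prod_{r=1}^k(1+\psi((u-c)/p^{(r)}))\>_{c}$. The paper applies H\"older's inequality, $\<\prod_{r=1}^k g_r\>\le\prod_{r=1}^k\<g_r^k\>^{1/k}$, to reduce to the single average $\<(1+\psi)^k\>=(1-2t)\cdot 1+(2t)\cdot 2^k$. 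You instead expand the product into $2^k$ terms indexed by subsets of $\{1,\dots,k\}$, use that $\psi$ is $\{0,1\}$-valued so each nonempty product is pointwise dominated by a single $\psi$ factor whose per-period average is $2t$, and sum to get $1+(2^k-1)\cdot 2t$. The two computations give the identical bound $1+2t(2^k-1)\le 1+t2^{k+1}$, but your route avoids H\"older and is a touch more elementary.

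One small correction: the period you need is that of $b\mapsto\Psi_{[b]}(x)^k$, not of $b\mapsto\cE_{m,[b]}(f)(x)$, so \Cref{lem:periodic-in-b} is not the relevant reference. The period of $\Psi_{[b]}(x)$ can be read off directly from \eqref{eq:psi-b-x-delta}: since each $\psi((x_i-b_i)/p)$ has period $p$ in $b_i$ and the admissible $p$ are all powers of $2$, the function has period equal to the largest admissible $p$, which is at most $c_2'\delta(x)n^{-1/2}$. The paper simply takes $\tau$ to be a power of $2$ with $\tau\ge c_2'\delta(x)n^{-1/2}$; this $\tau$ is automatically a period of $\Psi_{[b]}(x)$ and is divisible by every admissible $p$, so the divisibility issue you flag is already resolved by the choice of $\tau$. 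Your workaround of enlarging $\tau$ to a multiple is valid but an unnecessary detour.
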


\begin{proof}
Let $\tau$ be a power of $2$ such that $\tau \geq c_2' \delta(x) n^{-1/2}$. Then we see that $b \mapsto \Psi_{[b]}(x)$ has period $\tau$. Expanding the sum $\Psi_{[b]}(x)^k$, we have
\begin{align}
\<
\Psi_{[b]}(x)^k 
\>
&=
\sum_{p_1, \ldots, p_{k}}
\prod_{i=1}^n
\<
\prod_{r=1}^{k}
\left( 
1 + \psi\left(\frac{x_i - b_i}{p_r}\right)
\right)
\>_{[0,\tau]}
\\
\label{eq:sum-prod-prod-average}
&\leq
\sum_{p_1, \ldots, p_{k}}
\prod_{i=1}^n
\prod_{r=1}^{k}
\<
\left(
1 + \psi\left(\frac{x_i - b_i}{p_r}\right)
\right)^{k}
\>_{[0,\tau]}
^{1/k}
\end{align}
where each $p_r$ ($r = 1, \ldots k$) is a sum over powers of $2$ in the range $c_1' \delta(x) n^{-1/2} \leq p_r \leq c_2'\delta(x) n^{-1/2}$. (We apply H\"older's inequality in the second line. This is where we need $k > 0$.)

Due to our choice of $\tau$, each average in \eqref{eq:sum-prod-prod-average} is over an whole number of periods of $(1 + \psi)^k$. Note that
$
\<
\left(
1 + \psi(b)
\right)^{k}
\>_{[0,1]}
=
(1-2t) \cdot 1 + (2t) \cdot 2^{k}
\leq
1 + t2^{k+1}.
$
Also note that the number of terms in the sum over a particular $p_r$ is comparable with $\log_2(c_2'/c_1')$. Thus, 
$
\<
\Psi_{[b]}(x)^{k}
\>_B
\leq
\sum_{p_1, \ldots, p_{k}}
(1 + t2^{k+1})^n
\lesssim
(1 + t2^{k+1})^n
,
$
where the implied constant is independent of $n,x,t$ (but will depend on $k$).
\end{proof}

\subsection{Bounds on the norm of $\< \cE_m \>$}\label{sec:bounds-on-norm-of-averaged}

Up until now, $t$ has been an independent parameter. We now take $t = 1/n$. With this choice, we have ${(1+n^{-1} 2^{k+1})^n \leq e^{2^{k+1}}}$ for all $n$. (In contrast, if we keep $t$ constant as $n$ varies, then $(1 + t2^{k+1})^n$ grows exponentially in $n$.)

\begin{lem}
Let $f \in \cC^m(E)$ with norm $\leq 1$. Let $\bigF = \< \cE_m \>(f)$. Then
\begin{align}
|\partial^{\multia}\bigF(x)|
&\lesssim
n^{m+3|\multia|/2}
&&\text{for } x \in E^c, |\multia| \leq m.
\\
|\partial^{\multia}\bigF(x)|
&\lesssim
n^{m+3|\multia|/2} \delta(x)^{- |\multia|}
&&\text{for } x \in E^c, |\multia| \geq m+1.
\end{align}
\end{lem}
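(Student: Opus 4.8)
The plan is to combine the pointwise bounds of \Cref{lem:derivatives-of-F-b} with the identity \eqref{eq:derivative-under-average} from \Cref{lem:avg-ext-maps-Cm-omega} and the average estimate of \Cref{lem:avg-of-Psi}. Fix $x \in E^c$ and a multi-index $\multia$. Using \eqref{eq:derivative-under-average}, write
\begin{align*}
|\partial^{\multia} \bigF(x)|
=
\left| \< \partial^{\multia} \bigF_{[b]}(x) \> \right|
\leq
\< |\partial^{\multia} \bigF_{[b]}(x)| \>.
\end{align*}
Here the average is over $b$ in one period $[0,\tau]^n$, where $\tau$ is the common period furnished by \Cref{lem:periodic-in-b}; since every quantity in sight (the extension operator's derivative and $\Psi_{[b]}(x)$) is periodic in $b$ with a period that divides a suitable power of $2$, the average is well-defined and independent of which such period we pick.

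Next, apply the pointwise bound from \Cref{lem:derivatives-of-F-b}. For $|\multia| \leq m$ this gives $|\partial^{\multia}\bigF_{[b]}(x)| \lesssim n^{m+|\multia|/2} t^{-|\multia|} \Psi_{[b]}(x)^{|\multia|+1}$, so
\begin{align*}
|\partial^{\multia}\bigF(x)|
\lesssim
n^{m+|\multia|/2} t^{-|\multia|} \< \Psi_{[b]}(x)^{|\multia|+1} \>.
\end{align*}
Now invoke \Cref{lem:avg-of-Psi} with $k = |\multia| + 1$ to get $\< \Psi_{[b]}(x)^{|\multia|+1} \> \lesssim (1 + t 2^{|\multia|+2})^n$. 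Since we have set $t = 1/n$, we have $(1 + n^{-1} 2^{|\multia|+2})^n \leq e^{2^{|\multia|+2}}$, which is a constant depending only on $|\multia| \leq m$ (hence absorbed into the $\lesssim$). Also $t^{-|\multia|} = n^{|\multia|}$. Combining, $|\partial^{\multia}\bigF(x)| \lesssim n^{m+|\multia|/2} \cdot n^{|\multia|} = n^{m + 3|\multia|/2}$, which is the first claimed bound. The case $|\multia| \geq m+1$ is identical except for carrying along the extra factor $\delta(x)^{-|\multia|}$, which does not depend on $b$ and so passes through the average untouched, yielding $|\partial^{\multia}\bigF(x)| \lesssim n^{m+3|\multia|/2} \delta(x)^{-|\multia|}$.

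I do not expect a serious obstacle here: the work has been front-loaded into \Cref{lem:derivatives-of-F-b}, \Cref{lem:avg-ext-maps-Cm-omega}, and \Cref{lem:avg-of-Psi}. The one point requiring a little care is justifying the interchange of differentiation and averaging and the passage $|\<g\>| \leq \<|g|\>$ over the correct common period --- but \Cref{lem:periodic-in-b} and \Cref{lem:avg-ext-maps-Cm-omega} have already established exactly this, so it is a matter of citing them. The only bookkeeping is checking that all implied constants depend only on $m$ and $|\multia|$ (and $\sigma$), as per \Cref{rem:implied-constants}: this holds because the constant from \Cref{lem:avg-of-Psi} depends only on $k = |\multia|+1 \leq m+1$, the constant from \Cref{lem:derivatives-of-F-b} obeys the same convention, and $e^{2^{|\multia|+2}}$ depends only on $|\multia|$.
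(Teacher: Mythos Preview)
Your proposal is correct and follows essentially the same route as the paper: differentiate under the average via \Cref{lem:avg-ext-maps-Cm-omega}, apply the pointwise bound of \Cref{lem:derivatives-of-F-b}, average using \Cref{lem:avg-of-Psi}, and absorb $(1+n^{-1}2^{|\multia|+2})^n \leq e^{2^{|\multia|+2}}$ into the implied constant after setting $t=1/n$. One small remark: your parenthetical ``$k=|\multia|+1\leq m+1$'' applies only to the first case; for $|\multia|\geq m+1$ the constant from \Cref{lem:avg-of-Psi} depends on the larger $|\multia|+1$, but this is still permitted by \Cref{rem:implied-constants}.
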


\begin{proof}
Let $\bigF_{[b]} = \cE_{m,[b]}(f)$. Fix $x \in E^c$ and $|\alpha| \leq m$. Using \Cref{lem:derivatives-of-F-b} and \Cref{lem:avg-ext-maps-Cm-omega} (and letting $t = 1/n$), we have
$
|\partial^{\multia}\bigF(x)| 
\leq
\<|\partial^{\multia}\bigF_{[b]}(x)|\>
\lesssim
n^{m+3|\multia|/2} \<\Psi_{[b]}(x)^{|\multia|+1}\>
\lesssim
n^{m+3|\multia|/2}e^{2^{|\alpha|+2}}
\lesssim
n^{m+3|\multia|/2}
.
$
A similar argument proves the other inequality.
\end{proof}

\begin{cor}
\label{thm:polynomial-bound-on-norms}
The norm of the map $\< \cE_m \> : \cC^{m}(E) \to \cC^{m}(\bbR^n)$ is $O(n^{5m/2})$. The implied constant depends only on $m$.
\end{cor}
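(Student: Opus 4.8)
The plan is to combine the interior estimates on $\<\cE_m\>(f)$ (the lemma immediately preceding the corollary) with the behaviour of $\bigF = \<\cE_m\>(f)$ near and on $E$, and then verify that $\bigF$ genuinely lies in $\cC^m(\bbR^n)$ with the claimed norm bound. The key point is that the preceding lemma already gives, for $f$ of norm $\leq 1$ and $x \in E^c$, the bound $|\partial^\multia \bigF(x)| \lesssim n^{m+3|\multia|/2}$ for $|\multia| \leq m$; taking $|\multia| = m$ yields the dominant term $n^{m + 3m/2} = n^{5m/2}$, which is where the exponent in the corollary comes from. So the $\cC^m(\bbR^n)$ norm of $\bigF$ restricted to the open set $E^c$ is $O(n^{5m/2})$, and by linearity (and \Cref{lem:avg-ext-maps-Cm-omega}, which says $\<\cE_m\>$ really is an extension operator $\cC^m(E) \to \cC^m(\bbR^n)$) the operator norm is $O(n^{5m/2})$ once we also control $\partial^\multia \bigF$ on $E$ itself.

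First I would reduce to $f$ with $\cC^m(E)$ norm $\leq 1$ by homogeneity of the linear operator $\<\cE_m\>$. Next, on $E^c$ I would simply quote the previous lemma for $|\multia| \leq m$ and observe $n^{m+3|\multia|/2} \leq n^{m + 3m/2} = n^{5m/2}$, so $\sup_{x \in E^c,\ |\multia|\leq m} |\partial^\multia \bigF(x)| \lesssim n^{5m/2}$. Then, for $x \in E$, the extension property $F|_E = f$ (guaranteed by \Cref{lem:avg-ext-maps-Cm-omega}, since each $\cE_{m,[b]}$ satisfies \eqref{eq:defn-extension-operator} and the property is preserved under averaging) means $\partial^\multia \bigF(x) = \partial^\multia P_x(x)$ for $|\multia| \leq m$, and the latter is bounded by the $\cC^m(E)$ norm of $f$, hence by $1 \leq n^{5m/2}$, using \eqref{eq:defn-C-m-E-condition-1} in \Cref{defn:C-m-omega-E}. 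Combining the two cases gives $\|\bigF\|_{\cC^m(\bbR^n)} = \sup\{|\partial^\multia \bigF(x)| : x \in \bbR^n,\ |\multia|\leq m\} \lesssim n^{5m/2}$, so the operator norm of $\<\cE_m\>$ is $O(n^{5m/2})$ with implied constant depending only on $m$ (it depends on $\sigma$ and $m$ through the earlier $\lesssim$ conventions, but $\sigma$ is a fixed universal choice).

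I do not expect a serious obstacle here: the corollary is essentially bookkeeping, reading off the worst exponent $|\multia| = m$ from the preceding lemma and noting that the values on $E$ are controlled directly by the hypothesis. The one subtlety worth stating explicitly is that $\cC^m(\bbR^n)$ membership of $\bigF$ (continuity of the derivatives up to order $m$ across $\partial E$, not just boundedness on $E^c$ and on $E$ separately) is not re-proved here but imported from \Cref{lem:avg-ext-maps-Cm-omega}; with that in hand the norm is just the supremum of $|\partial^\multia \bigF|$ over all of $\bbR^n$, which we have bounded. If anything needs care it is making sure the implied constant does not secretly depend on $n$ through $t$ — but this is exactly why $t = 1/n$ was chosen in \Cref{sec:bounds-on-norm-of-averaged}, so that $(1 + t 2^{k+1})^n \leq e^{2^{k+1}}$ is $n$-independent, and that dependence has already been absorbed in the preceding lemma.
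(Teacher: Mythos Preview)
Your proposal is correct and matches the paper's approach: the corollary is stated without proof in the paper, since it is an immediate consequence of the preceding lemma (reading off the worst case $|\multia|=m$) together with \Cref{lem:avg-ext-maps-Cm-omega} for $\cC^m$ membership and the trivial bound on $E$. Your write-up makes these implicit steps explicit and handles them correctly.
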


\begin{rem}
It is not hard to make small improvements to the power of $n$ in \Cref{thm:polynomial-bound-on-norms}. However, we do not know the optimal power of $n$.
\end{rem}

\appendix

\section{The norm of the restriction operator}\label{sec:restriction-norm}

\begin{lem}
\label{lem:restriction-norm}
The norm of the restriction map $\cC^{m}(\bbR^n) \to \cC^{m}(E)$  given by $F \mapsto F|_E$ is $O(n^m)$. The implied constant depends only on $m$.
\end{lem}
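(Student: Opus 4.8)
The plan is to bound the $\cC^m(E)$ norm of $F|_E$ in terms of the $\cC^m(\bbR^n)$ norm of $F$, tracking the dependence on $n$. Recall that for $F \in \cC^m(\bbR^n)$ with norm $M$, the Whitney field $F|_E = (P_y)_{y \in E}$ is given by the Taylor polynomials \eqref{eq:taylor-polynomials-of-f}, and the $\cC^m(E)$ norm is the larger of the two suprema in \eqref{eq:defn-C-m-E-condition-1}. So there are two quantities to estimate.

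First I would handle $\sup_{x \in E, |\multia| \leq m} |\partial^\multia P_x(x)|$. Since $P_x$ is the degree-$m$ Taylor polynomial of $F$ at $x$, differentiating term by term gives $\partial^\multia P_x(x) = \partial^\multia F(x)$ for $|\multia| \leq m$ (the higher-order terms of $P_x$ vanish when we differentiate $|\multia|$ times and evaluate at $x$, and the lower-order terms are already killed). Hence this supremum equals $\sup_{x \in E, |\multia|\le m}|\partial^\multia F(x)| \leq M$, with no loss of any factor of $n$.

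Next I would estimate the second quantity, $\sup \frac{|\partial^\multia(P_x - P_y)(x)|}{|x-y|^{m-|\multia|}}$ over distinct $x, y \in E$ and $|\multia| \leq m$. Fix such $x, y$ and $\multia$. One computes $\partial^\multia P_y(x) = \sum_{|\multib| \leq m - |\multia|} \partial^{\multia + \multib} F(y) \frac{(x-y)^\multib}{\multib!}$, which is precisely the degree-$(m-|\multia|)$ Taylor polynomial of the function $\partial^\multia F$ expanded at $y$ and evaluated at $x$; and $\partial^\multia P_x(x) = \partial^\multia F(x)$. So $\partial^\multia(P_x - P_y)(x)$ is the Taylor remainder of $\partial^\multia F$ of order $m - |\multia|$. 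Applying Taylor's theorem with the integral (or Lagrange) form of the remainder to the $\cC^{m-|\multia|}$ function $\partial^\multia F$, one gets a bound of the form $\frac{1}{(m-|\multia|)!}\left(\max_{|\multib| = m-|\multia|} \sup |\partial^{\multia+\multib} F|\right) \cdot (\text{number of multi-indices } \multib \text{ of order } m - |\multia|) \cdot |x-y|^{m-|\multia|}$. The number of such multi-indices is $\binom{n + m - |\multia| - 1}{m - |\multia|} = O(n^{m-|\multia|}) = O(n^m)$, and each derivative is bounded by $M$. Dividing by $|x-y|^{m-|\multia|}$ gives the bound $O(n^m) M$, with the implied constant depending only on $m$.

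Combining the two estimates, $\|F|_E\|_{\cC^m(E)} = O(n^m)\,\|F\|_{\cC^m(\bbR^n)}$, which is the claim. The main (minor) obstacle is just being careful with the combinatorics of the multi-index expansion and choosing a clean form of Taylor's remainder so that the power of $n$ comes out to exactly $m$ rather than something larger; the identity $\sum_{|\multib| = r}\frac{1}{\multib!} = \frac{n^r}{r!}$ already used in the proof of \Cref{lem:derivatives-of-extm} is the convenient tool here, and it makes the count transparent. No genuine difficulty is expected.
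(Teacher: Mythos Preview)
Your proposal is correct and follows essentially the same route as the paper: identify $\partial^\multia P_x(x)=\partial^\multia F(x)$ for the first supremum, and for the second recognize $\partial^\multia(P_x-P_y)(x)$ as the Taylor remainder of $\partial^\multia F$ and bound it using the Lagrange form together with the identity $\sum_{|\multib|=r}\frac{1}{\multib!}=\frac{n^r}{r!}$. The paper carries this out exactly, obtaining the explicit bound $\frac{2n^{m-|\multia|}}{(m-|\multia|)!}|x-y|^{m-|\multia|}$; your slightly looser count via $\binom{n+m-|\multia|-1}{m-|\multia|}$ gives the same $O(n^m)$ conclusion.
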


\begin{proof}
Let $F \in \cC^{m}(\bbR^n)$ with $\| F \|_{\cC^{m}(\bbR^n)} \leq 1$. Let $(P_y)_{y \in E} = F|_E$. Then for all $x \in E$, we have
$
|\partial^\alpha P_x(x)| = |\partial^\alpha F(x)| \leq 1.
$
Also, by Taylor's theorem, for any $x,y \in E$, there exists a $z \in \bbR^n$ on the line segment joining $x$ and $y$ such that
\begin{align}
\left|
\partial^\multia (P_{x}-P_y)(x)
\right|
&\leq
\sum_{|\multib| = m - |\multia|}
\left|
\partial^{\multia+\multib}\bigF(z) - \partial^{\multia+\multib}\bigF(y)
\right|
\frac{|x-y|^{|\beta|}}{\multib!}
\\
&=
\frac{2n^{m-|\multia|}}{(m-|\multia|)!} |x-y|^{m- |\multia|}
,
\end{align}
where we use the identity 
$
\sum_{|\multib| = r} \frac{1}{\multib!} = \frac{n^r}{r!}.
$
\end{proof}

\section*{Acknowledgments}

The author would like to thank C.~Fefferman and E.~M.~Stein for helpful conversations.

%%%%%%%
% BIBLIOGRAPHY
%%%%%%%

\providecommand{\bysame}{\leavevmode\hbox to3em{\hrulefill}\thinspace}
\providecommand{\MR}{\relax\ifhmode\unskip\space\fi MR }
% \MRhref is called by the amsart/book/proc definition of \MR.
\providecommand{\MRhref}[2]{%
  \href{http://www.ams.org/mathscinet-getitem?mr=#1}{#2}
}
\providecommand{\href}[2]{#2}


\begin{thebibliography}{Whi34}

\bibitem[Gla58]{glaeser}
Georges Glaeser, \emph{\'{E}tude de quelques alg\`ebres tayloriennes}, J.
  Analyse Math. \textbf{6} (1958), 1--124; erratum, insert to 6 (1958), no. 2.
  \MR{0101294 (21 \#107)}

\bibitem[LG09]{legruyer}
Erwan Le~Gruyer, \emph{Minimal {L}ipschitz extensions to differentiable
  functions defined on a {H}ilbert space}, Geom. Funct. Anal. \textbf{19}
  (2009), no.~4, 1101--1118. \MR{2570317 (2011c:46163)}

\bibitem[Ste70]{stein1970}
Elias~M. Stein, \emph{Chapter {VI}: Extensions and restrictions}, Singular
  integrals and differentiability properties of functions, Princeton
  Mathematical Series, No. 30, Princeton University Press, Princeton, N.J.,
  1970, pp.~166--195. \MR{0290095 (44 \#7280)}

\bibitem[Wel73]{wells}
John~C. Wells, \emph{Differentiable functions on {B}anach spaces with
  {L}ipschitz derivatives}, Journal of Differential Geometry \textbf{8} (1973),
  no.~1, 135--152.

\bibitem[Whi34]{whitney}
Hassler Whitney, \emph{Analytic extensions of differentiable functions defined
  in closed sets}, Transactions of the American Mathematical Society
  \textbf{36} (1934), no.~1, 63--89.

\end{thebibliography}
\end{document}